\author{Dilip Raghavan}
\thanks{First author partially supported by National University of Singapore 
research grant numbers R-146-000-161-133 and R-146-000-211-112.}
\address{Department of Mathematics \\
National University of Singapore\\
Singapore 119076}
\email{dilip.raghavan@protonmail.com}
\urladdr{http://www.math.toronto.edu/raghavan}
\author{Saharon Shelah}
\thanks{Both authors were partially supported by European Research Council grant 338821. Paper 1060 on Shelah's list.}
\date{\today}
\subjclass[2010]{03E17, 03E55, 03E05, 03E20}
\keywords{asymptotic density, cardinal invariants, dominating number, weakly compact cardinal}
\title[Two inequalities]{Two inequalities between cardinal invariants}
\def\polhk#1{\setbox0=\hbox{#1}{\ooalign{\hidewidth
    \lower1.5ex\hbox{`}\hidewidth\crcr\unhbox0}}}
\newtheorem{Theorem}{Theorem}
\newtheorem{Lemma}[Theorem]{Lemma}
\newtheorem{Cor}[Theorem]{Corollary}
\newtheorem{Question}[Theorem]{Question}
\theoremstyle{definition}
\newtheorem{Def}[Theorem]{Definition}
\theoremstyle{remark}
\newcommand{\forces}{\Vdash}
\newcommand{\restrict}{\upharpoonright}
\newcommand{\forallbutfin}{{\forall}^{\infty}}
\renewcommand{\b}{\mathfrak{b}}
\renewcommand{\d}{{\mathfrak{d}}}
\newcommand{\s}{\mathfrak{s}}
\newcommand{\p}{{\mathfrak{p}}}
\renewcommand{\a}{{\mathfrak{a}}}
\renewcommand{\[}{\left[}
\renewcommand{\]}{\right]}
\renewcommand{\P}{\mathbb{P}}
\newcommand{\Q}{\mathbb{Q}}
\newcommand{\lc}{\left|}
\newcommand{\rc}{\right|}
\newcommand\ZFC{\mathrm{ZFC}}
\newcommand\FIN{\mathrm{FIN}}
\newcommand{\BS}{{\omega}^{\omega}}
\DeclareMathOperator{\non}{non}
\DeclareMathOperator{\tr}{tr}
\DeclareMathOperator{\otp}{otp}
\DeclareMathOperator{\cov}{cov}
\DeclareMathOperator{\add}{add}
\DeclareMathOperator{\cof}{cof}
\DeclareMathOperator{\cf}{cf}
\newcommand{\Pset}{\mathcal{P}}
\newcommand{\MMM}{\mathcal{M}}
\newcommand{\NNN}{\mathcal{N}}
\newcommand{\ZZZ}{{\mathcal{Z}}}
\newcommand{\cube}{{\[\omega\]}^{\omega}}
\newcommand{\I}{{\mathcal{I}}}
\newcommand{\JJJ}{{\mathcal{J}}}
\newcommand{\F}{{\mathcal{F}}}
\newcommand{\V}{{\mathbf{V}}}
\newcommand{\VP}{{\mathbf{V}}^{\P}}
\newcommand{\pr}[2]{\langle #1, #2 \rangle}
\newcommand{\seq}[4]{\langle {#1}_{#2}: #2 #3 #4 \rangle}
\begin{document}
\begin{abstract}
We prove two $\ZFC$ inequalities between cardinal invariants.
The first inequality involves cardinal invariants associated with an analytic P-ideal, in particular the ideal of subsets of $\omega$ of asymptotic density $0$.
We obtain an upper bound on the $\ast$-covering number, sometimes also called the weak covering number, of this ideal by proving in Section \ref{sec:covz0} that ${\cov}^{\ast}({\ZZZ}_{0}) \leq \d$.
In Section \ref{sec:skbk} we investigate the relationship between the bounding and splitting numbers at regular uncountable cardinals.
We prove in sharp contrast to the case when $\kappa = \omega$, that if $\kappa$ is any regular uncountable cardinal, then ${\s}_{\kappa} \leq {\b}_{\kappa}$. 
\end{abstract}
\maketitle
\section{Introduction} \label{sec:intro}
Cardinal invariants associated with analytic P-ideals and their quotients are becoming increasingly well studied.
Several cardinal invariants have been defined and investigated for quotients of the form $\Pset(\omega) \slash \I$, where $\I$ is some definable ideal, guided by analogy with the familiar case of the quotient $\Pset(\omega) \slash \FIN$.
The most interesting among these have been the cases where $\I$ is either ${F}_{\sigma}$ or an analytic P-ideal.
Recall that an ideal $\I$ on $\omega$ is called a \emph{P-ideal} if for every collection $\{{a}_{n}: n \in \omega\} \subset \I$, there exists $a \in \I$ such that $\forall n \in \omega \[{a}_{n} \; {\subset}^{\ast} \; a\]$.
Here $a \; {\subset}^{\ast} \; b$ means that $a \setminus b$ is finite.
When $\I$ is a tall ideal, it is possible to associate some cardinals with $\I$ that it wouldn't necessarily make sense to do with $\FIN$.
Recall that an ideal $\I$ on $\omega$ is \emph{tall} if it is \emph{proper}, meaning $\omega \notin \I$, it is \emph{non-principal}, meaning that ${\[\omega\]}^{< \omega} \subset \I$, and it has the property that $\forall a \in \cube \exists b \in {\[a\]}^{\omega}\[b \in \I\]$.
\begin{Def} \label{def:starinv}
 Let $\I$ be a tall P-ideal on $\omega$.
 Define
 \begin{align*}
  &{\add}^{\ast}(\I) = \min\{\lc \F \rc: \F \subset \I \wedge \forall b \in \I \exists a \in \F \[a \; {\not\subset}^{\ast} \; b\]\},\\
  &{\cov}^{\ast}(\I) = \min\{\lc \F \rc: \F \subset \I \wedge \forall a \in \cube \exists b \in \F\[\lc a \cap b \rc = \omega\]\},\\
  &{\cof}^{\ast}(\I) = \min\{\lc \F \rc: \F \subset \I \wedge \forall b \in \I \exists a \in \F \[b \; {\subset}^{\ast} \; a\]\},\\
  &{\non}^{\ast}(\I) = \min\{\lc \F \rc: \F \subset \cube \wedge \forall b \in \I\exists a \in \F\[\lc a \cap b \rc < \omega\]\}.
 \end{align*}
\end{Def}
Cardinals of this kind were first considered by Brendle and Shelah~\cite{bs1} and by Bartoszy{\'n}ski~\cite{barsmall}.
Brendle and Shelah~\cite{bs1} referred to ${\add}^{\ast}(\I)$ as $\p({\I}^{\ast})$ and ${\cov}^{\ast}(\I)$ as $\pi\p({\I}^{\ast})$, where ${\I}^{\ast} = \{\omega \setminus a: a \in \I\}$ is the \emph{dual filter} to $\I$.
The present terminology was popularized by Hern{\'a}ndez-Hern{\'a}ndez and Hru{\v{s}}{\'a}k~\cite{michaelstarinv} who carried out a detailed investigation of these invariants for tall analytic P-ideals.
Their choice of terminology was motivated by analogy with the following definitions which make sense for any ideal whatsoever.
\begin{Def} \label{def:Ichichon}
 Let $\I$ be any ideal on a set $X$. Define
 \begin{align*}
  &\add(\I) = \min\left\{\lc \F \rc: \F \subset \I \wedge \bigcup\F \notin \I\right\},\\
  &\cov(\I) = \min\left\{\lc \F \rc: \F \subset \I \wedge \bigcup\F = X\right\},\\
  &\cof(\I) = \min\left\{\lc \F \rc: \F \subset \I \wedge \forall B \in \I \exists A \in \F\[B \subset A\]\right\},\\
  &\non(\I) = \left\{\lc Y \rc: Y \subset X \wedge Y \notin \I\right\}.
 \end{align*}
\end{Def}
It is possible to associate with each tall ideal $\I$ on $\omega$ an ideal $\hat{\I}$ on $\Pset(\omega)$ which is generated by Borel subsets of $\Pset(\omega)$ in a natural way.
For each $a \in \Pset(\omega)$, let $\hat{a} = \{b \subset \omega: \lc a \cap b \rc = \omega\}$.
This is a ${G}_{\delta}$ subset of $\Pset(\omega)$.
If $\I$ is a tall ideal on $\omega$, then $\hat{\I} = \left\{X \subset \Pset(\omega): \exists a \in \I\[X \subset \hat{a}\]\right\}$ is an ideal on $\Pset(\omega)$ generated by Borel sets.
Moreover $\I$ is a P-ideal iff $\hat{\I}$ is a $\sigma$-ideal.
Now the invariants from Definition \ref{def:Ichichon} associated with $\hat{\I}$ correspond exactly with the $\ast$-invariants from Definition \ref{def:starinv} associated with $\I$.
It can be shown (see Proposition 1.2 of \cite{michaelstarinv}) that $\add(\hat{\I}) = {\add}^{\ast}(\I)$, $\cov(\hat{\I}) = {\cov}^{\ast}(\I)$, $\cof(\hat{\I}) = {\cof}^{\ast}(\I)$, $\non(\hat{\I}) = {\non}^{\ast}(\I)$.

One of the main tools used in \cite{michaelstarinv} for analyzing the $\ast$-invariants of tall analytic P-ideals is the Kat{\v e}tov order.
Let $\I$ and $\JJJ$ be ideals on $\omega$.
Recall that $\I$ is \emph{Kat{\v e}tov below} $\JJJ$ or $\I \; {\leq}_{K} \; \JJJ$ if there is a function $f: \omega \rightarrow \omega$ such that $\forall a \in \I\[{f}^{-1}(a) \in \JJJ\]$.
The significance of this ordering lies in the fact that $\I \; {\leq}_{K} \; \JJJ$ implies both that ${\cov}^{\ast}(\I) \geq {\cov}^{\ast}(\JJJ)$ and that ${\non}^{\ast}(\I) \leq {\non}^{\ast}(\JJJ)$ (see Proposition 3.1 of \cite{michaelstarinv}).
The Tukey ordering is also relevant here.
We say that $\pr{\I}{{\subset}^{\ast}}$ is \emph{Tukey below} $\pr{\JJJ}{{\subset}^{\ast}}$ and we write $\I \; {\leq}^{\ast}_{T} \; \JJJ$ if there is a map $\varphi: \I \rightarrow \JJJ$ such that if $X \subset \I$ any set that does not have an upper bound in the partial order $\pr{\I}{{\subset}^{\ast}}$, then $\varphi''X$ does not have an upper bound in the partial order $\pr{\JJJ}{{\subset}^{\ast}}$.
If $\I \; {\leq}^{\ast}_{T} \; \JJJ$, then ${\add}^{\ast}(\I) \geq {\add}^{\ast}(\JJJ)$ and ${\cof}^{\ast}(\I) \leq {\cof}^{\ast}(\JJJ)$ (see Proposition 2.1 of \cite{michaelstarinv}).
The ideal ${\ZZZ}_{0}$ of sets of asymptotic density 0 is a critical object of study in \cite{michaelstarinv}.
\begin{Def} \label{def:z0}
 A set $A \subset \omega$ is said to have \emph{asymptotic density 0} if $\displaystyle\lim_{n \rightarrow \infty}{\displaystyle\frac{\lc A \cap n \rc}{n}} = 0$.
 ${\ZZZ}_{0} = \{A \subset \omega: {\lim}_{n \rightarrow \infty}{\frac{\lc A \cap n \rc}{n}} = 0\}$.
\end{Def}
${\ZZZ}_{0}$ is easily seen to be a tall ${F}_{\sigma\delta}$ P-ideal.
It is pointed out in \cite{michaelstarinv} that ${\add}^{\ast}({\ZZZ}_{0}) = \add(\NNN)$ and ${\cof}^{\ast}({\ZZZ}_{0}) = \cof(\NNN)$, where $\NNN$ is the ideal of subsets of $\mathbb{R}$ that have Lebesgue measure 0.
This follows from earlier work of Todorcevic~\cite{analyticgaps} and Fremlin~\cite{frem3} on the Tukey order ${\leq}^{\ast}_{T}$.
Hern{\'a}ndez-Hern{\'a}ndez and Hru{\v{s}}{\'a}k~\cite{michaelstarinv} prove that ${\mathcal{E}\mathcal{D}}_{\mathrm{fin}} \;{\leq}_{K} \; {\I}_{\frac{1}{n}} \; {\leq}_{K} \; \tr(\NNN)\; {\leq}_{K} \; {\ZZZ}_{0}$ (see \cite{michaelstarinv} for the definitions of the ideals ${\mathcal{E}\mathcal{D}}_{\mathrm{fin}}$, ${\I}_{\frac{1}{n}}$, and $\tr(\NNN)$).
The upshot of this is that $\add(\NNN) \leq {\cov}^{\ast}({\ZZZ}_{0}) \leq \non(\MMM)$ and $\cov(\MMM) \leq {\non}^{\ast}({\ZZZ}_{0}) \leq \cof(\NNN)$, where $\MMM$ is the ideal of meager subsets of $\mathbb{R}$.
They then improve these bounds by proving that
$\min\{\cov(\NNN), \b\} \leq {\cov}^{\ast}({\ZZZ}_{0}) \leq \max\{\b, \non(\NNN)\}$ and that $ \min\{\d, \cov(\NNN)\} \leq {\non}^{\ast}({\ZZZ}_{0}) \leq \max\{\d, \non(\NNN)\}$.
Here $\b$ is the least size of an unbounded family in $\BS$ with respect to the ordering of eventual domination and $\d$ is the least size of a cofinal family.
It is also proved in \cite{michaelstarinv} that ${\ZZZ}_{0}$ is Kat{\v e}tov minimal among all density ideals.
Given these results the following question naturally suggests itself.
\begin{Question}[Question 3.23(a) of \cite{michaelstarinv}]\label{q:main1}
 Is ${\cov}^{\ast}({\ZZZ}_{0}) \leq \d$?
\end{Question}
Apart from the intrinsic interest in locating ${\cov}^{\ast}({\ZZZ}_{0})$ in relation to the cardinals in Cicho{\'n}'s diagram, Question \ref{q:main1} also has a motivation coming from forcing theory.
Recall the following definition.
\begin{Def} \label{def:diagonalize}
 Let $\V$ be any ground model and $\P \in \V$ be a notion of forcing.
 Let $\I \in \V$ be an ideal on $\omega$.
 We say that $\P$ \emph{diagonalizes} $\V \cap \I$ if there exists $\mathring{A} \in \VP$ such that ${\forces}_{\P} \mathring{A} \in \cube$ and for each $X \in \V \cap \I$, ${\forces}_{\P} \lc X \cap \mathring{A} \rc < \omega$.
\end{Def}
If $\I$ is a definable tall ideal and if $\P \in \V$ diagonalizes $\V \cap \I$, then $\P$ tends to push ${\cov}^{\ast}(\I)$ up. 
A classical theorem of Laflamme~\cite{zapping} says that any ${F}_{\sigma}$ ideal can be diagonalized by a proper $\BS$-bounding forcing.
When combined with standard preservation theorems and bookkeeping arguments, Laflamme's result enables the construction of a model where ${\cov}^{\ast}(\I) > \d$ for every tall ${F}_{\sigma}$ ideal $\I$.
Laflamme's result has led to speculation about whether all tall ${F}_{\sigma\delta}$ P-ideals, which arguably constitute the second nicest class of definable ideals after the ${F}_{\sigma}$ ideals, could also be diagonalized by a proper $\BS$-bounding forcing. 
\begin{Question} \label{q:submain1}
Suppose $\I \in \V$ is an ${F}_{\sigma\delta}$ P-ideal.
Does there exist a proper $\BS$-bounding $\P \in \V$ which diagonalizes $\V \cap \I$?
Is it consistent that ${\cov}^{\ast}(\I) > \d$ holds for all tall ${F}_{\sigma\delta}$ P-ideals $\I$?
\end{Question}
It has long been known that one cannot hope for anything like this if one moves up one level to consider ${F}_{\sigma\delta\sigma}$ ideals.
The ideal $\FIN \times \FIN$, which is defined to be $\{a \subset \omega \times \omega: \{n \in \omega: \{m \in \omega: \pr{n}{m} \in a\} \ \text{is infinite}\} \ \text{is finite}\}$, is an ${F}_{\sigma\delta\sigma}$ ideal and any $\P$ that diagonalizes it must add a dominating real.
In Section \ref{sec:covz0} we give a negative answer to both Questions \ref{q:main1} and \ref{q:submain1} by proving the following theorem.
\begin{Theorem} \label{thm:main1}
 ${\cov}^{\ast}({\ZZZ}_{0}) \leq \d$.
\end{Theorem}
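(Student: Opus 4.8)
The plan is to use a dominating family to attach to every infinite subset of $\omega$ an interval partition on which that set is forced to be ``not too thin'', and then to catch it with one of $\d$-many explicitly built density-zero sets. Fix a dominating family $\{f_\alpha : \alpha < \d\} \subseteq \BS$; after replacing each $f_\alpha$ by a suitable larger function we may assume $f_\alpha$ is strictly increasing and grows as fast as we like, in particular $f_\alpha(n) \geq 2n$ for all $n$, which changes neither the cardinality of the family nor its being dominating. For each $\alpha < \d$ put $m^\alpha_0 = 1$, $m^\alpha_{k+1} = f_\alpha(m^\alpha_k)$, and $I^\alpha_k = [m^\alpha_k, m^\alpha_{k+1})$; the $I^\alpha_k$ partition $\omega \setminus \{0\}$ and grow rapidly, $m^\alpha_{k+1} \geq 2 m^\alpha_k$.

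The first step is the combinatorial payoff of domination. Let $a \in \cube$ with increasing enumeration $e_a = \seq{a}{n}{\in}{\omega}$, and suppose $e_a \leq^{\ast} f_\alpha$, say $a_n < f_\alpha(n)$ for $n \geq N$. For $k$ with $m^\alpha_k \geq N$ the $m^\alpha_k$-th element of $a$ satisfies $a_{m^\alpha_k} < f_\alpha(m^\alpha_k) = m^\alpha_{k+1}$, whence $\lc a \cap m^\alpha_{k+1} \rc \geq m^\alpha_k + 1 > \lc a \cap m^\alpha_k \rc$, so $a$ meets every $I^\alpha_k$ for $k$ large. Telescoping this, $\lc a \cap (I^\alpha_k \cup I^\alpha_{k+1}) \rc = \lc a \cap m^\alpha_{k+2} \rc - \lc a \cap m^\alpha_k \rc \geq m^\alpha_{k+1} - m^\alpha_k \geq m^\alpha_{k+1}/2$ for all large $k$. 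Thus, once the scale is chosen to outrun $e_a$, the set $a$ has in absolute terms very many points in each two-block window $[m^\alpha_k, m^\alpha_{k+2})$ --- far more than a selector through the $I^\alpha_k$ could have. This is the crucial leverage: $a$ itself may well lie in $\ZZZ_0$, but relative to the right scale it cannot be \emph{arbitrarily} thin.

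Next I would build for each $\alpha$ a countable family $\mathcal F_\alpha \subseteq \ZZZ_0$ and set $\mathcal F = \bigcup_{\alpha < \d} \mathcal F_\alpha$, so $\lc \mathcal F \rc \leq \d \cdot \aleph_0 = \d$. A member of $\mathcal F_\alpha$ is described by prescribing its trace on each block $I^\alpha_k$: running over a fixed countable set of finite parameters --- each encoding a geometric subdivision of $I^\alpha_k$ together with a ``depth'' --- one lets the trace on $I^\alpha_k$ be a union of sub-blocks of total size at most, say, $m^\alpha_k / k^2$; since $m^\alpha_k$ grows so fast that $\sum_{j \leq k} m^\alpha_j / j^2$ is negligible next to $m^\alpha_k$, the resulting set lies in $\ZZZ_0$. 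Given $a \in \cube$, pick $\alpha$ with $e_a \leq^{\ast} f_\alpha$; the density estimate above should then force, for infinitely many $k$, a point of $a$ to fall into a portion of $I^\alpha_k$ (or $I^\alpha_k \cup I^\alpha_{k+1}$) located precisely enough by one of the finite parameters that the corresponding member of $\mathcal F_\alpha$ contains it. That member has infinite intersection with $a$, and since $a$ was arbitrary, $\mathcal F$ witnesses ${\cov}^{\ast}(\ZZZ_0) \leq \d$.

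The hard part is exactly this localization. A priori the elements of $a$ inside a block $I^\alpha_k$ can sit in an unpredictable position, and a naive pigeonhole over equal sub-intervals would require a number of guesses per block that grows with $k$ --- yielding only a bound like $\non(\MMM)$ rather than $\d$. The technical heart of the proof is therefore a combinatorial lemma converting the absolute lower bound $\lc a \cap (I^\alpha_k \cup I^\alpha_{k+1}) \rc \geq m^\alpha_{k+1}/2$ into a density-zero set meeting $a$ infinitely often, using only a bounded amount of guessing: per $\alpha$, only countably much data, or, if necessary, a second copy of the dominating family, which still keeps the total at $\d^{2} = \d$. Everything else --- that some $f_\alpha$ dominates $e_a$, that the constructed sets have density zero, and the cardinal arithmetic --- is routine bookkeeping.
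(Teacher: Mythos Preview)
Your sketch stops precisely where the content begins. You correctly attach interval partitions to a dominating family and observe that any $a \in \cube$ is ``thick'' relative to one of them, but then you say outright that ``the hard part is exactly this localization'' and offer neither a construction of the sets in ${\F}_{\alpha}$ nor an argument that one of them meets $a$ infinitely often. This is not a minor omission: your bound $\lc a \cap (I^{\alpha}_{k} \cup I^{\alpha}_{k+1}) \rc \geq m^{\alpha}_{k+1}/2$ is measured against a window of length $m^{\alpha}_{k+2} - m^{\alpha}_{k}$, which may be enormously larger, so the relative density can be arbitrarily small and pigeonhole over sub-blocks yields nothing---as you yourself note, naive guessing gives at best $\non(\MMM)$. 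The paper in fact never uses this thickness; it uses only the much weaker fact that $a$ meets cofinitely many blocks of some partition $J$.

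The paper's actual mechanism looks nothing like ``a countable set of finite parameters per $\alpha$''. The density-zero sets are indexed by $D \times D \times {\omega}_{1}$, where $D$ is a dominating family of interval partitions. For each $J \in D$ one equips every block ${J}_{k}$ (of size ${2}^{{l}_{k}}$) with a binary-tree decomposition $\{{A}_{k,\sigma} : \sigma \in {2}^{\leq {l}_{k}}\}$, and then builds by a transfinite diagonalization functions ${f}_{J,\alpha}$ ($\alpha < {\omega}_{1}$) so that $\{x \in {J}_{k} : {f}_{J,\alpha}(x) \geq i\}$ is always a level-$i$ node of this tree, with the key property that any ${2}^{i}$ of the $\alpha$'s realize, for infinitely many $k$, all ${2}^{i}$ level-$i$ nodes and hence cover ${J}_{k}$. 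The set ${Z}_{I,J,\alpha} = {\bigcup}_{l} {\bigcup}_{k \in {I}_{l}} \{x \in {J}_{k} : {f}_{J,\alpha}(x) \geq l\}$ lies in ${\ZZZ}_{0}$ because of the outer partition $I$. If $a$ avoids every ${Z}_{I,J,\alpha}$ for fixed $J,\alpha$, a \emph{second} application of the dominating property of $D$ (now varying $I$) forces ${f}_{J,\alpha}''a$ to be bounded by some ${i}_{\alpha}$; pigeonholing the ${i}_{\alpha}$ over $\alpha$ and invoking the covering property then gives $a \cap {J}_{k} = \emptyset$ for infinitely many $k$, contradicting the choice of $J$. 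The tree decomposition, the diagonal construction of the ${f}_{J,\alpha}$ with its covering property, and the double use of $D$ are exactly the ideas your proposal is missing.
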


Section \ref{sec:skbk} deals with cardinal invariants on uncountable cardinals. 
\begin{Def} \label{def:skbk}
 Let $\kappa > \omega$ be a regular cardinal.
 For $f, g \in {\kappa}^{\kappa}$,  $f \; {<}^{\ast}\; g$ means that $\lc \{\alpha < \kappa: g(\alpha) \leq f(\alpha)\} \rc < \kappa$.
 A set $F \subset {\kappa}^{\kappa}$ is said to be \emph{unbounded} if there does not exist $g \in {\kappa}^{\kappa}$ such that $\forall f \in F\[f \; {<}^{\ast} \; g\]$.
 A set $F \subset {\kappa}^{\kappa}$ is said to be \emph{dominating} if $\forall f \in {\kappa}^{\kappa} \exists g \in F \[f \; {<}^{\ast} \; g\]$.
 
 For $a, b \in \Pset(\kappa)$, we write $a \; {\subset}^{\ast} \; b$ to mean that $\lc a \setminus b \rc < \kappa$.
 Since $\kappa$ is regular, this is equivalent to saying that $\exists \delta < \kappa\[\left( a \setminus \delta \right) \subset b\]$.
 For $a, b \in \Pset(\kappa)$ we say that $a$ \emph{splits} $b$ if both $b \cap a$ and $b \cap \left( \kappa \setminus a \right)$ have cardinality $\kappa$.
 A family $F \subset \Pset(\kappa)$ is called a \emph{splitting family} if $\forall b \in {\[\kappa\]}^{\kappa} \exists a \in F \[a \ \text{splits} \ b\]$.
 
 We define the cardinal invariants ${\b}_{\kappa}$, ${\d}_{\kappa}$, and ${\s}_{\kappa}$ as follows:
 \begin{align*}
  &{\b}_{\kappa} = \min\{\lc F \rc: F \subset {\kappa}^{\kappa} \wedge F \ \text{is unbounded}\};\\
  &{\d}_{\kappa} = \min\{\lc F \rc: F \subset {\kappa}^{\kappa} \wedge F \ \text{is dominating}\};\\
  &{\s}_{\kappa} = \min\{\lc F \rc: F \subset \Pset(\kappa) \wedge F \ \text{is a splitting family}\}.
 \end{align*}
 \end{Def}
 These are of course direct analogues of the cardinals $\b, \d$, and $\s$ that play an important role in the theory of cardinal characteristics on $\omega$. 
 Historically, one of the first works to investigate some these higher analogues in depth was the paper \cite{sh541} by Cummings and Shelah.
 They show in that paper that for a regular $\kappa > \omega$, ${\kappa}^{+} \leq \cf({\b}_{\kappa}) = {\b}_{\kappa} \leq \cf({\d}_{\kappa}) \leq {2}^{\kappa}$.
 They also proved in \cite{sh541} that these are essentially the only restrictions on ${\b}_{\kappa}$ and ${\d}_{\kappa}$ that are provable in $\ZFC$.
 In this sense, ${\b}_{\kappa}$ and ${\d}_{\kappa}$ behave in exactly the same way as $\b$ and $\d$.
 
 While the results of Cummings and Shelah~\cite{sh541} do not involve any large cardinals, it has slowly become clear that obtaining consistency results on cardinal invariants at large cardinals is easier than obtaining the same consistency results at accessible cardinals.
 For example a recent work of Garti and Shelah~\cite{uk2k2} proves the consistency of ${\mathfrak{u}}_{\kappa} < {2}^{\kappa}$ at a supercompact cardinal $\kappa$, based on earlier methods introduced by D{\v{z}}amonja and Shelah~\cite{uk2k1}.
 Here ${\mathfrak{u}}_{\kappa}$ is the smallest number of sets needed to generate a uniform ultrafilter on $\kappa$.
 On the other hand, it is completely open whether this situation is consistent at $\kappa = {\omega}_{1}$. 
 
 It is a classical result of Shelah~\cite{b<s} that $\b < \s$ is consistent.
 This result had a lot of impact on the study of cardinal invariants on $\omega$.
 It was the first published application of creature forcing, a method that has subsequently become indispensable to many consistency results on cardinal invariants on $\omega$.
 The importance of this result is one of the motivations for posing the following question.
 \begin{Question} \label{q:main2}
  Is it consistent to have a regular uncountable cardinal $\kappa$ such that ${\b}_{\kappa} < {\s}_{\kappa}$?
 \end{Question}
 The cardinal ${\s}_{\kappa}$ has been investigated by Kamo~\cite{kamo}, Suzuki~\cite{suzukisplitting}, and Zapletal~\cite{jindrasplitting}.
 Suzuki~\cite{suzukisplitting} proved that ${\s}_{\kappa}$ is small for most regular uncountable cardinals.
 He showed that for a regular $\kappa > \omega$, ${\s}_{\kappa} \geq \kappa$ iff $\kappa$ is strongly inaccessible and ${\s}_{\kappa} \geq {\kappa}^{+}$ iff $\kappa$ is weakly compact.
 The main result of Zapletal~\cite{jindrasplitting} is that if it is consistent to have a regular uncountable cardinal $\kappa$ such that ${\s}_{\kappa} \geq {\kappa}^{++}$, then it is also consistent that there is a $\kappa$ with $o(\kappa) \geq {\kappa}^{++}$.
 In particular, any positive answer to Question \ref{q:main2} would have had to start with a substantial large cardinal hypothesis.
 On the other hand, Kamo~\cite{kamo} proved that it is consistent relative to a supercompact cardinal that ${\s}_{\kappa} \geq {\kappa}^{++}$ holds at a supercompact $\kappa$.
 It was perhaps hoped that a positive answer to Question \ref{q:main2} would lead to new techniques for forcing at uncountable cardinals $\kappa$, at least when $\kappa$ is supercompact, and help generate further results like the consistency of ${\b}_{\kappa} < {\a}_{\kappa}$.
 However we will prove that Question \ref{q:main2} has a negative solution.
\begin{Theorem} \label{thm:main2}
 For any regular uncountable cardinal $\kappa$, ${\s}_{\kappa} \leq {\b}_{\kappa}$.
\end{Theorem}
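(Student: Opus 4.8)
The plan is to isolate the weakly compact case, which is the only one requiring any work, and there to manufacture a splitting family directly out of an unbounded family, exploiting the club structure that is available at uncountable $\kappa$ but not at $\omega$.

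First the reduction. By the theorem of Cummings and Shelah~\cite{sh541}, ${\b}_{\kappa}\geq{\kappa}^{+}$ for every regular uncountable $\kappa$; by Suzuki's theorem~\cite{suzukisplitting}, ${\s}_{\kappa}\leq\kappa$ whenever $\kappa$ is not weakly compact (in fact ${\s}_{\kappa}<\kappa$ if $\kappa$ is not strongly inaccessible, and ${\s}_{\kappa}=\kappa$ if $\kappa$ is inaccessible but not weakly compact). Hence ${\s}_{\kappa}\leq\kappa<{\kappa}^{+}\leq{\b}_{\kappa}$ unless $\kappa$ is weakly compact, and from now on we assume $\kappa$ weakly compact. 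Fix an unbounded $F\subseteq{\kappa}^{\kappa}$ with $\lc F\rc={\b}_{\kappa}\geq{\kappa}^{+}$. Replacing each $f$ by $\gamma\mapsto\sup\{f(\beta):\beta\leq\gamma\}+\gamma+1$, we may assume every $f\in F$ is strictly increasing with $f(\gamma)>\gamma$ for all $\gamma<\kappa$; this preserves unboundedness.

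Next the construction. Fix a partition $\kappa={E}_{0}\sqcup{E}_{1}$ into two stationary sets. For $f\in F$ iterate $f$ from $0$: put ${\gamma}^{f}_{0}=0$, ${\gamma}^{f}_{\xi+1}=f({\gamma}^{f}_{\xi})$, and ${\gamma}^{f}_{\lambda}=\sup_{\xi<\lambda}{\gamma}^{f}_{\xi}$ at limit $\lambda$. Because $\kappa$ is regular, the range ${C}_{f}=\{{\gamma}^{f}_{\xi}:\xi<\kappa\}$ is a club and $\xi\mapsto{\gamma}^{f}_{\xi}$ is a normal function onto it; this is exactly the feature that collapses at $\kappa=\omega$, where the closure points of a function on $\omega$ may be finite, and it is the reason one is forced to use $\d$ rather than $\b$ there. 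Set ${a}_{f}=\bigcup\{[{\gamma}^{f}_{\xi},{\gamma}^{f}_{\xi+1}):\xi\in{E}_{0}\}$, so $\kappa\setminus{a}_{f}=\bigcup\{[{\gamma}^{f}_{\xi},{\gamma}^{f}_{\xi+1}):\xi\in{E}_{1}\}$. The claim is that $\{{a}_{f}:f\in F\}$, a family of size $\leq{\b}_{\kappa}$, is splitting, which proves the theorem. The key reduction is this: if $b\in{[\kappa]}^{\kappa}$ and $\{\gamma<\kappa:b\cap[\gamma,f(\gamma))\neq\varnothing\}$ contains a club for some $f\in F$, then ${a}_{f}$ splits $b$. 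Indeed, intersecting with ${C}_{f}$ and pulling back along the normal enumeration of ${C}_{f}$ produces a club $D\subseteq\kappa$ with $b\cap[{\gamma}^{f}_{\xi},{\gamma}^{f}_{\xi+1})\neq\varnothing$ for every $\xi\in D$; since ${E}_{0}\cap D$ and ${E}_{1}\cap D$ are both stationary, hence cofinal, $b$ meets cofinally many intervals inside ${a}_{f}$ and cofinally many inside $\kappa\setminus{a}_{f}$, so $\lc b\cap{a}_{f}\rc=\lc b\setminus{a}_{f}\rc=\kappa$. Thus it suffices to find, for every $b\in{[\kappa]}^{\kappa}$, an $f\in F$ with $f(\gamma)>{m}_{b}(\gamma)$ on a club, where ${m}_{b}(\gamma)=\min(b\setminus\gamma)$.

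Finally the heart, which is where I expect the genuine difficulty. Since ${m}_{b}(\gamma)=\gamma<f(\gamma)$ whenever $\gamma\in b$, we have $\{\gamma:f(\gamma)\leq{m}_{b}(\gamma)\}\subseteq\kappa\setminus b$; so if $\kappa\setminus b$ is nonstationary then, by the reduction just made, \emph{every} ${a}_{f}$ splits $b$, and only co-stationary $b$ are problematic. So suppose $\{{a}_{f}:f\in F\}$ is not splitting, fix $b$ not split by any ${a}_{f}$ (necessarily $\kappa\setminus b$ stationary), and aim for a contradiction with unboundedness. For each $f$, since ${a}_{f}$ fails to split $b$, one of $\lc b\cap{a}_{f}\rc,\lc b\setminus{a}_{f}\rc$ is $<\kappa$, i.e. $b$ is eventually contained in one of the two blocks ${a}_{f}$, $\kappa\setminus{a}_{f}$; replacing $F$ once by an unbounded subfamily we may take this block always to be ${a}_{f}$. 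Unwinding, for every $f\in F$ there are ${\delta}_{f}<\kappa$ and the stationary set ${T}_{f}=\{{\gamma}^{f}_{\xi}:\xi\in{E}_{1}\}$ with $f(\gamma)\leq{m}_{b}(\gamma)$ for all $\gamma\in{T}_{f}\setminus{\delta}_{f}$. The plan from here is to feed the configuration $\langle F,b\rangle$ into the weak compactness of $\kappa$ — via the embedding characterization / ${\Pi}^{1}_{1}$-indescribability together with the normality of the club filter — so as to amalgamate the stationarily-many witnesses to ``$b$ lies below $f$'' across all $f\in F$ into a single $g\in{\kappa}^{\kappa}$ with $f\;{<}^{\ast}\;g$ for every $f\in F$, contradicting the unboundedness of $F$. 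The delicate point, and the main obstacle, is precisely this amalgamation: mere unboundedness (as opposed to dominating) hands us, for a single $f$, control only on a stationary set — not a club, not a co-bounded set — and stitching these fragments into an honest $\;{<}^{\ast}$-bound is the step with no analogue at $\omega$, and the one that uses the large-cardinal strength of $\kappa$ essentially; getting the bookkeeping of that conversion exactly right, in particular handling the behaviour of the various $f$ off their stationary sets simultaneously, is what the proof will have to do carefully.
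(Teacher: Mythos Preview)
Your proposal has a genuine gap at the amalgamation step, and it is not a matter of bookkeeping. You have reduced to: for each $f$ in (a refined, still unbounded) $F$, there is a stationary ${T}_{f}$ depending on $f$ and a bound ${\delta}_{f}$ with $f(\gamma) \leq {m}_{b}(\gamma)$ for all $\gamma \in {T}_{f} \setminus {\delta}_{f}$. From this you want a single $g \in {\kappa}^{\kappa}$ with $f \; {<}^{\ast} \; g$ for every $f \in F$. Weak compactness cannot supply this: the family $F$ has size ${\b}_{\kappa} \geq {\kappa}^{+}$, so it does not live inside any $\kappa$-sized structure to which a weak-compactness embedding with critical point $\kappa$ can be applied, and invoking such embeddings one $f$ at a time yields nothing uniform. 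Knowing that each $f$ is dominated by ${m}_{b}$ on its own stationary set does not produce a common ${<}^{\ast}$-bound, because the sets ${T}_{f}$ vary with $f$; no reflection or indescribability principle at $\kappa$ amalgamates ${\kappa}^{+}$ many such fragments into one function. Your reduction to ``$\kappa$ weakly compact'' is in fact a red herring: arguing by contradiction you have available the much stronger hypothesis ${\s}_{\kappa} > {\b}_{\kappa}$, not merely its consequence ${\s}_{\kappa} > \kappa$, and it is the full strength of the former that must be exploited.

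The paper does not attempt to build a splitting family out of $F$ at all. It argues by contradiction: put an unbounded family of size $\lambda = {\b}_{\kappa}$ inside $M \prec H(\theta)$ with $\lambda \subset M$ and $\lc M \rc = \lambda$. Since $\lc M \cap \Pset(\kappa) \rc < {\s}_{\kappa}$, this collection is not splitting, so there is a single ${A}_{\ast} \in {\[\kappa\]}^{\kappa}$ with ${A}_{\ast} \; {\subset}^{\ast} \; x$ or ${A}_{\ast} \; {\subset}^{\ast} \; \kappa \setminus x$ for every $x \in M \cap \Pset(\kappa)$. The induced $D = \{x : {A}_{\ast} \; {\subset}^{\ast} \; x\}$ is a $\kappa$-complete filter deciding every subset of $\kappa$ in $M$; one then finds ${f}_{\ast} \in M \cap {\kappa}^{\kappa}$ whose $D$-class is a least upper bound of the constant functions, shows by a normality argument that ${f}_{\ast}''{A}_{\ast}$ is ${\subset}^{\ast}$-contained in every club lying in $M$, and observes that the successor function along the increasing enumeration of ${f}_{\ast}''{A}_{\ast}$ therefore ${<}^{\ast}$-bounds every $f \in M \cap {\kappa}^{\kappa}$, contradicting the unboundedness of $F \subset M$. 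The single unsplit set ${A}_{\ast}$ --- whose existence uses ${\s}_{\kappa} > {\b}_{\kappa}$ in full --- is precisely the uniform object your argument is missing.
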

It should be noted that this is not the first time that a significant difference has been observed in the behavior of cardinal characteristics between $\omega$ and bigger regular cardinals.
Blass, Hyttinen, and Zhang proved in \cite{BHZ} that ${\d}_{\kappa} = {\kappa}^{+}$ implies ${\a}_{\kappa} = {\kappa}^{+}$ for all regular uncountable cardinals $\kappa$, while the question of whether $\d = {\omega}_{1}$ implies $\a = {\omega}_{1}$ is a long-standing unresolved problem.
\section{A bound for ${\cov}^{\ast}({\ZZZ}_{0})$} \label{sec:covz0}
Theorem \ref{thm:main1} is proved in this section.
\begin{Def} \label{def:interval}
 An \emph{interval partition} is a sequence $I = \seq{i}{n}{\in}{\omega} \in \BS$ such that ${i}_{0} = 0$ and $\forall n \in \omega\[{i}_{n} < {i}_{n + 1}\]$.
 If $I$ is an interval partition and $n \in \omega$, then ${I}_{n}$ is the $n$th interval of the partition.
 In other words, ${I}_{n} = [{i}_{n}, {i}_{n + 1}) = \{k \in \omega: {i}_{n} \leq k < {i}_{n + 1}\}$.
\end{Def}
\begin{Lemma} \label{lem:regular}
 Let $I$ be an interval partition.
 Let $A \subset \omega$ be such that for each $l \geq 0$, there exists $N \in \omega$ such that for each $n \geq N$:
 \begin{enumerate}
  \item
  $\frac{\lc A \cap {I}_{n} \rc}{\lc {I}_{n} \rc} \leq {2}^{-l}$;
  \item
  $\forall i, j \in A \cap {I}_{n}\[i \neq j \implies \lc i - j \rc > {2}^{l - 1}\]$.
 \end{enumerate}
 Then $A$ has density $0$.
\end{Lemma}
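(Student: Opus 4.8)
The plan is to verify directly that $\displaystyle\lim_{m \to \infty} \frac{\lc A \cap m \rc}{m} = 0$. Since the quantity $\displaystyle\limsup_{m \to \infty} \frac{\lc A \cap m \rc}{m}$ does not depend on the parameter $l$, it suffices to fix an arbitrary $l \geq 1$ and show that this limit superior is at most $3 \cdot {2}^{-l}$; letting $l \to \infty$ then finishes the argument.

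So I would fix $l \geq 1$ and let $N$ be the witness provided by the hypothesis for this $l$. Given $m \geq {i}_{N}$, let $K$ be the unique index with ${i}_{K} \leq m < {i}_{K + 1}$; then $K \geq N$, and I would decompose
\begin{equation*}
A \cap m = \left( A \cap {i}_{N} \right) \cup \left( A \cap [{i}_{N}, {i}_{K}) \right) \cup \left( A \cap [{i}_{K}, m) \right)
\end{equation*}
and bound the three blocks separately. The first block has at most ${i}_{N}$ elements, a constant independent of $m$. For the middle block, clause (1) applied to each ${I}_{n}$ with $N \leq n < K$ gives $\lc A \cap [{i}_{N}, {i}_{K}) \rc \leq \sum_{n = N}^{K - 1} {2}^{-l}\lc {I}_{n}\rc = {2}^{-l}({i}_{K} - {i}_{N}) \leq {2}^{-l} m$. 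For the last block, note $A \cap [{i}_{K}, m) \subseteq {I}_{K}$, so by clause (2) its elements are pairwise more than ${2}^{l - 1}$ apart, hence it has at most $\frac{m - {i}_{K}}{{2}^{l - 1}} + 1 \leq 2 \cdot {2}^{-l} m + 1$ elements. Summing the three estimates yields $\lc A \cap m \rc \leq {i}_{N} + 1 + 3 \cdot {2}^{-l} m$, so that $\frac{\lc A \cap m \rc}{m} \leq \frac{{i}_{N} + 1}{m} + 3 \cdot {2}^{-l}$, which has limit superior at most $3 \cdot {2}^{-l}$ as $m \to \infty$.

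The one step that is not completely routine — and, I expect, the reason clause (2) appears in the hypothesis at all — is the treatment of the last block $A \cap [{i}_{K}, m)$. Clause (1) controls $\lc A \cap {I}_{K} \rc$ only in terms of $\lc {I}_{K} \rc$, which can be vastly larger than $m - {i}_{K}$, and so it gives no useful information about the fragment of ${I}_{K}$ lying below $m$; it is precisely the spacing condition (2) that lets one bound $\lc A \cap [{i}_{K}, m)\rc$ linearly in $m - {i}_{K}$. Once that observation is made, the remainder of the proof is elementary arithmetic, and since $l \geq 1$ was arbitrary one concludes that $A$ has density $0$.
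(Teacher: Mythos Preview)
Your proof is correct and follows essentially the same approach as the paper's: decompose $A \cap m$ into an initial block before $i_N$, a middle union of complete intervals controlled by clause~(1), and a trailing partial interval whose size is bounded via the spacing condition~(2). The only difference is cosmetic: the paper invokes the hypothesis at level $l+2$ and chooses a larger threshold so as to obtain the bound ${2}^{-l}$ exactly, whereas you accept $3\cdot{2}^{-l}$ and let $l \to \infty$, which is slightly more direct.
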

\begin{proof}
 Fix $l \geq 0$.
 Using the given hypotheses, fix $N \in \omega$ such that:
 \begin{enumerate}
  \item[(3)]
  $N > 0$ and ${i}_{N} \geq {2}^{l + 1}$;
  \item[(4)]
  for each $n \geq N$:
  \begin{enumerate}
   \item[(a)]
   $\frac{\lc A \cap {I}_{n}\rc}{\lc {I}_{n} \rc} \leq {2}^{-l - 2}$;
   \item[(b)]
   $\forall i, j \in A \cap {I}_{n}\[i \neq j \implies \lc i - j \rc > {2}^{l + 1}\]$.
  \end{enumerate}
 \end{enumerate}
  Find $L \in \omega$ with $M = {i}_{L} \geq {2}^{l + 2}{i}_{N}$.
  We will show that for each $k \geq M$, $\frac{\lc A \cap k \rc}{k} \leq {2}^{-l}$.
  This will suffice to prove that $A \in {\ZZZ}_{0}$.
  To this end, we first show that for each $m \in \omega$, if ${i}_{m} \geq M$, then $\frac{\lc A \cap {i}_{m} \rc}{{i}_{m}} \leq {2}^{-l - 1}$.
  Fix any such $m$.
  Note that $m > 0$, ${i}_{N} > 0$, and that ${i}_{m} \geq {2}^{l + 2}{i}_{N} > {i}_{N}$.
  Put $Z = [{i}_{N}, {i}_{m}) = {\bigcup}_{N \leq k \leq m - 1}{I}_{k}$.
  For each $N \leq k \leq m - 1$, $\frac{\lc A \cap {I}_{k}\rc}{\lc {I}_{k} \rc} \leq {2}^{-l - 2}$, and so $\frac{\lc A \cap Z\rc}{\lc Z\rc} \leq {2}^{-l - 2}$.
  Therefore, $\frac{\lc A \cap {i}_{m} \rc}{{i}_{m}} = \frac{\lc A \cap {i}_{N}\rc}{{i}_{m}} + \frac{\lc A \cap Z \rc}{{i}_{m}} \leq \frac{{i}_{N}}{{i}_{m}} + \frac{\lc A \cap Z \rc}{\lc Z \rc}$.
  Since $\frac{{i}_{N}}{{i}_{m}} \leq {2}^{-l- 2}$, we get $\frac{\lc A \cap {i}_{m} \rc}{{i}_{m}} \leq {2}^{-l- 2} + {2}^{-l - 2} = {2}^{-l -1}$, as needed.
  
  Next, if $k \geq M$, then there exists $m \in \omega$ such that ${i}_{m} \geq M$ and $k \in {I}_{m}$.
  Thus it suffices to prove that for all $m \in \omega$, if ${i}_{m} \geq M$, then for all $k \in {I}_{m}$, $\frac{\lc A \cap k \rc}{k} \leq {2}^{-l}$.
  Fix any such $m$.
  If ${I}_{m} \cap A = 0$, then for any $k \in {I}_{m}$, $A \cap k = A \cap {i}_{m}$, and so $\frac{\lc A \cap k \rc}{k} \leq \frac{\lc A \cap k \rc}{{i}_{m}} = \frac{\lc A \cap {i}_{m}\rc}{{i}_{m}} \leq {2}^{-l - 1} \leq {2}^{-l}$.
  Thus we may assume that ${I}_{m} \cap A \neq 0$.
  Let $\{{a}_{1}, \dotsc, {a}_{p}\}$ enumerate ${I}_{m} \cap A$ in increasing order.
  Fix any $k \in {I}_{m}$.
  If $k \leq {a}_{1}$, then $A \cap {i}_{m} = A \cap k$ and so, once again $\frac{\lc A \cap k \rc}{k} \leq \frac{\lc A \cap k \rc}{{i}_{m}} = \frac{\lc A \cap {i}_{m} \rc}{{i}_{m}} \leq {2}^{-l - 1} \leq {2}^{-l}$.
  We may assume that ${a}_{1} < k$.
  Put $q = \max\{1 \leq q \leq p: {a}_{q} < k\}$ and note that $A \cap k \subset (A \cap {a}_{1}) \cup \{{a}_{1}, \dotsc, {a}_{q}\}$.
  By the remarks above, $\frac{\lc A \cap {a}_{1}\rc}{{a}_{1}} \leq {2}^{-l - 1}$, and so $\frac{\lc A \cap {a}_{1} \rc}{k} \leq \frac{\lc A \cap {a}_{1} \rc}{{a}_{1}} \leq {2}^{-l - 1}$.
  Clause (4)(b) implies that ${a}_{q} - {a}_{1} \geq (q - 1){2}^{l + 1}$ and clause (3) implies that ${2}^{l + 1} < {a}_{1}$.
  It follows from this that $\frac{q}{k} \leq {2}^{-l - 1}$.
  Therefore $\frac{\lc A \cap k \rc}{k} \leq \frac{\lc A \cap {a}_{1} \rc}{k} + \frac{q}{k} \leq {2}^{-l - 1} + {2}^{-l - 1} = {2}^{-l}$.
\end{proof}
\begin{Lemma} \label{lem:tree}
 Let $l$ be a member of $\omega$ greater than $0$ and let $X \subset \omega$ with $\lc X \rc = {2}^{l}$.
 Then there exists a sequence $\{{A}_{\sigma}: \sigma \in {2}^{\leq l}\}$ such that:
 \begin{enumerate}
  \item
  $\forall m \leq l\[{\bigcup}_{\sigma \in {2}^{m}}{A}_{\sigma} = X \wedge \forall \sigma, \tau \in {2}^{m}\[\sigma \neq \tau \implies {A}_{\sigma} \cap {A}_{\tau} = 0 \]\]$;
  \item
  $\forall \sigma \in {2}^{\leq l}\[\lc {A}_{\sigma} \rc = {2}^{l - \lc \sigma \rc}\]$ and $\forall \sigma, \tau \in {2}^{\leq l}\[\sigma \subset \tau \implies {A}_{\tau} \subset {A}_{\sigma}\]$;
  \item
  for each $\sigma \in {2}^{\leq l}$, $\forall i, j \in {A}_{\sigma}\[i \neq j \implies \lc i - j \rc > {2}^{\lc \sigma \rc - 1}\]$.
 \end{enumerate}
\end{Lemma}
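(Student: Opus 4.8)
The plan is to build the family $\{{A}_{\sigma}: \sigma \in {2}^{\leq l}\}$ by recursion on the length of $\sigma$, verifying clauses (1)--(3) one level at a time. I begin by setting ${A}_{\langle\rangle} = X$ for the empty sequence; since $\lc X \rc = {2}^{l} = {2}^{l - 0}$, clauses (1) and (2) hold at level $0$, and clause (3) is trivial there because distinct integers differ by at least $1 > {2}^{-1}$.

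For the recursive step, fix $m$ with $0 \leq m < l$ and suppose ${A}_{\sigma}$ has already been defined for every $\sigma \in {2}^{m}$ in such a way that $\{{A}_{\sigma}: \sigma \in {2}^{m}\}$ is a partition of $X$ into sets of size ${2}^{l - m}$, each satisfying clause (3). Given $\sigma \in {2}^{m}$, enumerate ${A}_{\sigma}$ in increasing order as ${b}_{0} < {b}_{1} < \dotsb < {b}_{{2}^{l - m} - 1}$ (note ${2}^{l - m}$ is even, as $m < l$), and define ${A}_{\sigma^\frown 0} = \{{b}_{2i}: 0 \leq i < {2}^{l - m - 1}\}$ and ${A}_{\sigma^\frown 1} = \{{b}_{2i + 1}: 0 \leq i < {2}^{l - m - 1}\}$; that is, I split ${A}_{\sigma}$ according to the parity of the position of an element in the increasing enumeration of ${A}_{\sigma}$. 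Then ${A}_{\sigma^\frown 0}$ and ${A}_{\sigma^\frown 1}$ are disjoint with union ${A}_{\sigma}$, and each has size ${2}^{l - m - 1} = {2}^{l - (m + 1)}$. Letting $\sigma$ range over ${2}^{m}$ and using that the ${A}_{\sigma}$ partition $X$, it follows at once that $\{{A}_{\tau}: \tau \in {2}^{m + 1}\}$ partitions $X$ and that ${A}_{\tau} \subset {A}_{\tau \restrict m}$, so (together with transitivity of $\subset$) clauses (1) and (2) continue to hold at level $m + 1$.

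The only computation is clause (3) for the new nodes, and the exponent ${2}^{\lc \sigma \rc - 1}$ is exactly what makes the parity split work: consecutive elements of ${A}_{\sigma^\frown 0}$ in increasing order have the form ${b}_{2i}$ and ${b}_{2i + 2}$, and by the inductive hypothesis applied to ${A}_{\sigma}$ both ${b}_{2i + 1} - {b}_{2i}$ and ${b}_{2i + 2} - {b}_{2i + 1}$ exceed ${2}^{m - 1}$, whence ${b}_{2i + 2} - {b}_{2i} > {2}^{m} = {2}^{\lc \sigma^\frown 0 \rc - 1}$; the same bound holds for ${A}_{\sigma^\frown 1}$. Since in each of these two sets every pair of consecutive elements (in increasing order) is separated by more than ${2}^{m}$, every pair is, and so clause (3) holds at level $m + 1$. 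Iterating the step for $m = 0, 1, \dotsc, l - 1$ produces the desired sequence. I do not expect any real obstacle here: all the content is in the single observation that splitting a finite set of integers by the parity of enumeration position at least doubles the minimum gap between elements.
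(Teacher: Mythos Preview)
Your proof is correct and follows essentially the same approach as the paper: both build the tree by induction on the level, splitting each ${A}_{\sigma}$ into even- and odd-indexed elements of its increasing enumeration (the paper phrases this via the order isomorphism $\phi: {2}^{l-m} \rightarrow {A}_{\sigma}$ and the sets of even and odd integers below ${2}^{l-m}$). You additionally spell out the verification of clause~(3), which the paper leaves as ``easy to verify.''
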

\begin{proof}
 Build the ${A}_{\sigma}$ by induction on $m \leq l$.
 When $m = 0$ put ${A}_{\emptyset} = X$.
 Clause (3) is satisfied because $\forall i, j \in X\[i \neq j \implies \lc i - j \rc \geq 1 > \frac{1}{2}\]$.
 Now suppose $m < l$ and that ${A}_{\sigma}$ is given for every $\sigma \in {2}^{m}$.
 Fix any $\sigma \in {2}^{m}$.
 Then $\lc {A}_{\sigma} \rc = {2}^{l - m}$.
 Note that ${2}^{l - m} \geq 2$.
 Let $\phi: {2}^{l - m} \rightarrow {A}_{\sigma}$ be the order isomorphism.
 Put $E = \{x < {2}^{l - m}: x \ \text{is even}\}$ and $O = \{x < {2}^{l - m}: x \ \text{is odd}\}$.
 As ${2}^{l - m} \geq 2$, $\lc E \rc = \lc O \rc = {2}^{l - m - 1}$ and $E \cup O = {2}^{l - m}$.
 Define ${A}_{{\sigma}^{\frown}{\langle 0 \rangle}} = \phi'' E \subset {A}_{\sigma}$ and ${A}_{{\sigma}^{\frown}{\langle 1 \rangle}} = \phi'' O \subset {A}_{\sigma}$.
 It is easy to verify that the ${A}_{{\sigma}^{\frown}{\langle i \rangle}}$ satisfy (1)-(3), for $\sigma \in {2}^{m}$ and $i \in 2$.
\end{proof}
The next lemma is a variation on a well-known characterization of the cardinal $\d$, which may be found, for example, in \cite{blasssmall}.
We include a proof here for completeness.
\begin{Lemma} \label{lem:ipdom}
 There is a family $D$ of interval partitions such that:
 \begin{enumerate}
  \item
  $\lc D \rc \leq \d$;
  \item
  for each $I \in D$ and for each $n \in \omega$, there exists ${l}_{n} \in \omega$ such that ${l}_{n} > 0$, ${l}_{n} \geq n$, and $\lc {I}_{n} \rc = {2}^{{l}_{n}}$;
  \item
  for any interval partition $J$ there exists $I \in D$ such that $\forallbutfin n \in \omega \exists k > n \[{J}_{k} \subset {I}_{n}\]$.
 \end{enumerate}
\end{Lemma}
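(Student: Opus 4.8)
This is essentially a refinement of the folklore fact that $\d$ is the least cardinality of a family of interval partitions that is cofinal under the ``coarsening'' relation, and the plan is to read such a family off from a dominating family of functions, building in the two extra features demanded here: the prescribed block lengths of clause (2), and the requirement in clause (3) that the witnessing interval $J_k$ of $J$ can be taken with index $k > n$, not merely $k \in \omega$.

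First I would fix a family $\{g_{\alpha}:\alpha<\d\}\subset\BS$ that is cofinal under eventual domination, so that for every $h\in\BS$ there is $\alpha<\d$ with $h(m)\leq g_{\alpha}(m)$ for all but finitely many $m$. From each $g_{\alpha}$ I would construct an interval partition $I^{\alpha}=\seq{i}{n}{\in}{\omega}$ by recursion: put $i_{0}=0$, and given $i_{n}$, let $l_{n}$ be the least integer with $l_{n}\geq\max\{1,n\}$ and $2^{l_{n}}\geq g_{\alpha}(i_{n})$ (such $l_{n}$ exists since $2^{l}\to\infty$), and set $i_{n+1}=i_{n}+2^{l_{n}}$. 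Let $D=\{I^{\alpha}:\alpha<\d\}$. Clause (1) is then immediate, and clause (2) holds with this very $l_{n}$, since $\lc I^{\alpha}_{n}\rc=i_{n+1}-i_{n}=2^{l_{n}}$ with $l_{n}>0$ and $l_{n}\geq n$. Note also that $i_{n}\geq n$, so $i_{n}\to\infty$, and that $i_{n+1}\geq g_{\alpha}(i_{n})$ for every $n$.

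For clause (3), fix an arbitrary interval partition $J=\seq{j}{n}{\in}{\omega}$ and apply cofinality of the family to the function $m\mapsto j_{m+2}$, obtaining $\alpha<\d$ with $j_{m+2}\leq g_{\alpha}(m)$ for all $m$ beyond some $m_{0}$. Write $I=I^{\alpha}$. Since $i_{n}\to\infty$, we get $i_{n+1}\geq g_{\alpha}(i_{n})\geq j_{i_{n}+2}$ for all $n$ past some $n_{0}$. Fix $n\geq n_{0}$ and let $p$ be the index of the $J$-block containing $i_{n}$, so $j_{p}\leq i_{n}<j_{p+1}$ and hence $p\leq j_{p}\leq i_{n}$. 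Then $j_{p+2}\leq j_{i_{n}+2}\leq i_{n+1}$, so $J_{p+1}=[j_{p+1},j_{p+2})\subset[i_{n},i_{n+1})=I_{n}$; moreover $i_{n+1}\geq j_{i_{n}+2}$ forces the $J$-block containing $i_{n+1}$ to have index at least $i_{n}+2\geq p+2$. Writing $p_{n}$ for the index of the $J$-block containing $i_{n}$, we have thus shown $p_{n+1}\geq p_{n}+2$ for all $n\geq n_{0}$; telescoping gives $p_{n}\geq 2(n-n_{0})$, so $p_{n}\geq n$ and in particular $p_{n}+1>n$ once $n\geq 2n_{0}$. For such $n$ the index $k=p_{n}+1$ witnesses clause (3).

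The only point that takes more than bookkeeping is producing a witness of index strictly above $n$, and this is exactly what the linear lower bound $p_{n}\geq 2(n-n_{0})$ delivers; the reason it holds is that feeding the cofinal family the \emph{shifted} endpoint function $m\mapsto j_{m+2}$ — the shift by $2$ is the crux — forces each block of $I$ to contain two consecutive blocks of $J$, so the $J$-block index advances by at least $2$ whenever the $I$-block index advances by $1$. Reconciling this rapid growth with the power-of-two constraint of clause (2) is free: rounding the length we need up to the next power of two that is at least $2^{\max\{1,n\}}$ only enlarges the blocks, which is all the argument ever uses.
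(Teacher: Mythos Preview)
Your proof is correct and follows essentially the same route as the paper: build each $I\in D$ from a member of a dominating family so that the blocks have the prescribed power-of-two lengths, and for a given $J$ dominate a function encoding $J$'s endpoints shifted by two. The only cosmetic difference is in how the condition $k>n$ is secured: the paper bakes it into a recursively defined $g$ via $m=\max\{m_{x},n\}$, while you obtain it after the fact; note that your telescoping is more than you need, since from $p_{n+1}\geq i_{n}+2\geq n+2$ you already get $p_{n'}+1>n'$ for all $n'\geq n_{0}+1$ directly.
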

\begin{proof}
 Let $F \subset \BS$ be a dominating family with $\lc F \rc = \d$.
 Define an interval partition ${I}_{f}$ as follows.
 ${i}_{f, 0} = 0$.
 Given ${i}_{f, n} \in \omega$, let \begin{align*}
 M = \max\left( \left\{ f(x): x \leq {i}_{f, n} + 1 \right\} \cup \{{i}_{f, n} + 1\} \right).
 \end{align*}
 Find ${l}_{n} \in \omega$ such that ${l}_{n} > 0$, ${l}_{n} \geq n$, and ${2}^{{l}_{n}} + {i}_{f, n} > M$.
 Define ${i}_{f, n + 1} = {2}^{{l}_{n}} + {i}_{f, n}$.
 This completes the definition of the interval partition ${I}_{f}$.
 Define $D = \{{I}_{f}: f \in F\}$.
 It is clear that $\lc D \rc \leq \lc F \rc = \d$.
 And the definition of ${I}_{f}$ ensures that for each $n \in \omega$, $\lc {I}_{f, n} \rc = {2}^{{l}_{n}}$, for some ${l}_{n} \in \omega$ with ${l}_{n} > 0$ and ${l}_{n} \geq n$.
 Now suppose that $J$ is any interval partition.
 Define $g \in \BS$ as follows.
 $g(0) = 0$.
 For any $n \in \omega$, given $g(n) \in \omega$, define $g(n + 1)$ as follows.
 Put $x = g(n) + 1$ and let ${m}_{x}$ be the unique $m \in \omega$ such that $x \in {J}_{m}$.
 Let $m = \max\{{m}_{x}, n\}$ and define $g(n + 1) = {j}_{m + 2}$.
 Note that $g(n) < g(n) + 1 = x < {j}_{\left( {m}_{x} + 2 \right)} \leq {j}_{\left( m + 2 \right)} = g(n + 1)$.
 Thus $g$ is strictly increasing.
 Now using the fact that $F$ is dominating, find $f \in F$ and $N \in \omega$ such that $\forall n \geq N\[f(n) \geq g(n)\]$.
 Fix any $n \geq N$.
 Note that ${i}_{f, n} + 1 > {i}_{f, n} \geq n \geq N$.
 So $g({i}_{f, n} + 1) \leq f({i}_{f, n} + 1)$.
 Also since $g$ is strictly increasing, ${i}_{f, n} \leq g({i}_{f, n})$.
 By the definition of $g({i}_{f, n} + 1)$, there exists $m \in \omega$ such that $n \leq {i}_{f, n} \leq m$ such that ${i}_{f, n} \leq g({i}_{f, n}) < g({i}_{f, n}) + 1 < {j}_{m + 1} < {j}_{m + 2} = g({i}_{f, n} + 1) \leq f({i}_{f, n} + 1) < {i}_{f, n + 1}$.
 Thus ${J}_{m + 1} \subset {I}_{f, n}$.
 Since $m + 1 > n$, we have shown that $\forall n \geq N \exists k > n\[{J}_{k} \subset {I}_{f, n}\]$, as needed.
\end{proof}
\begin{Def} \label{def:FJ}
 Let $J$ be an interval partition such that for each $n \in \omega$ there exists ${l}_{n} \in \omega$ such that ${l}_{n} > 0$, ${l}_{n} \geq n$, and $\lc {J}_{n} \rc = {2}^{{l}_{n}}$.
 Applying Lemma \ref{lem:tree}, fix a sequence $\bar{A} = \langle {A}_{n, \sigma}: n \in \omega \wedge \sigma \in {2}^{\leq {l}_{n}}\rangle$ such that for each $n \in \omega$, the sequence $\{{A}_{n, \sigma}: \sigma \in {2}^{\leq {l}_{n}}\}$ satisfies (1)--(3) of Lemma \ref{lem:tree} with $l$ as ${l}_{n}$ and $X$ as ${J}_{n}$.
 Define ${\F}_{J, \bar{A}}$ to be the collection of all functions $f \in \BS$ such that for each $n \in \omega$ and $l < {l}_{n}$, there exists $\sigma \in {2}^{l + 1}$ such that ${f}^{-1}(\{l\}) \cap {J}_{n} = {A}_{n, \sigma}$, and there exists $\tau \in {2}^{{l}_{n}}$ such that ${f}^{-1}(\{{l}_{n}\}) \cap {J}_{n} = {A}_{n, \tau}$.
\end{Def}
Observe that if $f \in {\F}_{J, \bar{A}}$, then for each $n \in \omega$ and $k \in {J}_{n}$, $f(k) \leq {l}_{n}$.
Also for any $n, l \in \omega$,
\begin{align*}
 \displaystyle\frac{\lc \left\{ k \in {J}_{n}: f(k) \geq l \right\} \rc}{\lc {J}_{n} \rc} \leq {2}^{-l},
\end{align*}
and for any $i, j \in \left\{ k \in {J}_{n}: f(k) \geq l \right\}$, if $i \neq j$, then $\lc i - j \rc > {2}^{l - 1}$.
Moreover for any $f \in {\F}_{J, \bar{A}}$, $n \in \omega$, and $l \leq {l}_{n}$, there is ${\sigma}_{f, n, l} \in {2}^{l}$ such that ${A}_{n, {\sigma}_{f, n, l}} = \{k \in {J}_{n}: f(k) \geq l\}$.
\begin{Lemma} \label{lem:omega1rec}
Let $J$ and $\bar{A}$ be as in Definition \ref{def:FJ}.
There exists a sequence of functions $\langle {f}_{J, \bar{A}, \alpha}: \alpha < {\omega}_{1} \rangle$ such that:
\begin{enumerate}
 \item
 for all $\alpha < {\omega}_{1}$, ${f}_{J, \bar{A}, \alpha} \in {\F}_{J, \bar{A}}$;
 \item
 for each triple $\langle i, m, F \rangle$ such that $i, m \in \omega$, $m \leq {2}^{i}$, and $F \in {\[{\omega}_{1}\]}^{m}$, there exists ${B}_{i, m, F} \in {\[\left( \omega \setminus i \right)\]}^{\omega}$ such that 
 \begin{align*}
 \forall \alpha, \beta \in F \forall n \in {B}_{i, m, F}\[\alpha \neq \beta \implies {\sigma}_{{f}_{J, \bar{A}, \alpha}, n, i} \neq {\sigma}_{{f}_{J, \bar{A}, \beta}, n, i}\].
 \end{align*}
\end{enumerate}
\end{Lemma}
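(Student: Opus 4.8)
The plan is to reduce the statement to a purely combinatorial construction inside the product $\prod_{n \in \omega}{2}^{{l}_{n}}$, and to carry out that construction by a recursion of length ${\omega}_{1}$ with a one-step-at-a-time diagonalization.

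For the reduction, I would first record two facts about ${\F}_{J, \bar{A}}$. First, for $f \in {\F}_{J, \bar{A}}$ the strings $\langle {\sigma}_{f, n, l}: l \leq {l}_{n}\rangle$ form a $\subset$-increasing chain with $\lc {\sigma}_{f, n, l}\rc = l$: since ${A}_{n, {\sigma}_{f, n, l+1}} \subset {A}_{n, {\sigma}_{f, n, l}}$, and by clauses (1) and (2) of Lemma \ref{lem:tree} the only nonempty ${A}_{n, \tau}$ with $\tau \in {2}^{l+1}$ that are contained in ${A}_{n, {\sigma}_{f, n, l}}$ are the two with ${\sigma}_{f, n, l} \subset \tau$ (every other ${A}_{n, \tau}$ being disjoint from ${A}_{n, {\sigma}_{f, n, l}}$), one gets ${\sigma}_{f, n, l} \subset {\sigma}_{f, n, l+1}$, hence ${\sigma}_{f, n, i} = {\sigma}_{f, n, {l}_{n}} \restrict i$ for all $i \leq {l}_{n}$. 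Second, for every $\langle {b}_{n}: n \in \omega\rangle \in \prod_{n \in \omega}{2}^{{l}_{n}}$ there is an $f \in {\F}_{J, \bar{A}}$ with ${\sigma}_{f, n, {l}_{n}} = {b}_{n}$ for all $n$, namely the one whose restriction to each ${J}_{n}$ is $k \mapsto \max\{l \leq {l}_{n}: k \in {A}_{n, {b}_{n} \restrict l}\}$; this is a direct check against Definition \ref{def:FJ} using Lemma \ref{lem:tree}. Granting these, it suffices to produce $\langle {b}^{\alpha}: \alpha < {\omega}_{1}\rangle$ in $\prod_{n}{2}^{{l}_{n}}$ so that for all $i, m \in \omega$ with $m \leq {2}^{i}$ and all $F \in {\[{\omega}_{1}\]}^{m}$, the set $\{n \in \omega \setminus i: \forall \alpha, \beta \in F\,(\alpha \neq \beta \implies {b}^{\alpha}_{n} \restrict i \neq {b}^{\beta}_{n} \restrict i)\}$ is infinite. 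Indeed, letting ${f}_{J, \bar{A}, \alpha}$ be an $f \in {\F}_{J, \bar{A}}$ with ${\sigma}_{f, n, {l}_{n}} = {b}^{\alpha}_{n}$ for all $n$, one has ${\sigma}_{{f}_{J, \bar{A}, \alpha}, n, i} = {b}^{\alpha}_{n} \restrict i$ whenever $n \geq i$ (as ${l}_{n} \geq n$), so the displayed set becomes the required ${B}_{i, m, F}$.

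For the construction, I would recurse on $\alpha < {\omega}_{1}$, maintaining the hypothesis that for all $i, m \in \omega$ with $m \leq {2}^{i}$ and all $F \in {\[\alpha\]}^{m}$ there is an infinite ${S}(i, F) \subset \omega \setminus i$ with ${b}^{\alpha}_{n} \restrict i \neq {b}^{\beta}_{n} \restrict i$ for all $n \in {S}(i, F)$ and distinct $\alpha, \beta \in F$ (for $m \leq 1$ take ${S}(i, F) = \omega \setminus i$). At stage $\alpha$ all requirements about subsets of $\alpha$ already hold, so it remains, for every pair $(i, F)$ with $F \in {\[\alpha\]}^{<\omega}$ and $\lc F\rc + 1 \leq {2}^{i}$, to make the strings $\langle {b}^{\gamma}_{n} \restrict i: \gamma \in F \cup \{\alpha\}\rangle$ pairwise distinct on an infinite set of $n \geq i$. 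There are countably many such pairs; enumerate them as $\langle ({i}_{k}, {F}_{k}): k \in \omega\rangle$ with each pair listed infinitely often, put ${S}_{k} = {S}({i}_{k}, {F}_{k})$ (legitimate since ${F}_{k} \subset \alpha$ and $\lc {F}_{k}\rc \leq {2}^{{i}_{k}} - 1$), and recursively choose ${m}_{0} < {m}_{1} < \cdots$ with ${m}_{k} \in {S}_{k}$. For $n = {m}_{k}$ pick ${s}_{k} \in {2}^{{i}_{k}}$ with ${s}_{k} \neq {b}^{\gamma}_{{m}_{k}} \restrict {i}_{k}$ for all $\gamma \in {F}_{k}$ (possible because $\lc {F}_{k}\rc \leq {2}^{{i}_{k}} - 1 < \lc {2}^{{i}_{k}}\rc$) and set ${b}^{\alpha}_{{m}_{k}} = {s}_{k}\,{}^{\frown}\,{0}^{{l}_{{m}_{k}} - {i}_{k}}$, where ${0}^{j}$ is the constant-$0$ string of length $j$ (this is legitimate since ${l}_{{m}_{k}} \geq {m}_{k} \geq {i}_{k}$); put ${b}^{\alpha}_{n} = {0}^{{l}_{n}}$ for all other $n$. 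Then for each eligible pair $(i, F)$, which equals $({i}_{k}, {F}_{k})$ for infinitely many $k$, the set ${B} = \{{m}_{k}: ({i}_{k}, {F}_{k}) = (i, F)\}$ is an infinite subset of $\omega \setminus i$ on which $\langle {b}^{\gamma}_{n} \restrict i: \gamma \in F \cup \{\alpha\}\rangle$ are pairwise distinct: they are distinct on $F$ because ${m}_{k} \in {S}_{k} = {S}(i, F)$, and ${b}^{\alpha}_{n} \restrict i = {s}_{k}$ differs from all the others by choice of ${s}_{k}$. This restores the hypothesis at $\alpha + 1$; at limit stages it holds automatically, every finite subset of a limit ordinal being bounded below it, so the recursion runs through all of ${\omega}_{1}$. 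Finally, given $\langle i, m, F\rangle$ with $m \leq {2}^{i}$ and $F \in {\[{\omega}_{1}\]}^{m}$: if $m \leq 1$ put ${B}_{i, m, F} = \omega \setminus i$, and otherwise, with $\gamma = \max F$, let ${B}_{i, m, F}$ be the infinite ${B} \subset \omega \setminus i$ produced at stage $\gamma$ for the pair $(i, F \setminus \{\gamma\})$ (note $\lc F \setminus \{\gamma\}\rc + 1 = m \leq {2}^{i}$), on which the ${\sigma}_{{f}_{J, \bar{A}, \delta}, n, i} = {b}^{\delta}_{n} \restrict i$ are pairwise distinct for $\delta \in F$.

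I expect the only genuinely constrained step to be the choice of ${s}_{k}$, which is precisely where the hypothesis $m \leq {2}^{i}$ is consumed: one avoids at most $\lc {F}_{k}\rc = m - 1 \leq {2}^{i} - 1$ forbidden values inside the ${2}^{i}$-element set ${2}^{i}$, so at least one admissible value always remains (exactly one when $m = {2}^{i}$). Everything else is standard diagonalization and bookkeeping; the part that demands a little care is the reduction in the second paragraph, particularly the chain identity ${\sigma}_{f, n, i} = {\sigma}_{f, n, {l}_{n}} \restrict i$, since it is what makes prescribing the single top-level string ${b}^{\alpha}_{n}$ enough to govern every ${\sigma}_{{f}_{J, \bar{A}, \alpha}, n, i}$.
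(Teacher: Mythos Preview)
Your proof is correct and follows essentially the same approach as the paper: a length-${\omega}_{1}$ recursion in which, at stage $\alpha$, one diagonalizes against the countably many requirements $(i,F)$ with $F \in {[\alpha]}^{<{2}^{i}}$ by choosing, on a witnessing set for $(i,F)$, a top-level string whose restriction to level $i$ avoids the at most ${2}^{i}-1$ forbidden values. The differences are cosmetic. First, you separate out explicitly the reduction to choosing $\langle {b}^{\alpha}_{n}\rangle \in \prod_{n}{2}^{{l}_{n}}$ and the identity ${\sigma}_{f,n,i} = {\sigma}_{f,n,{l}_{n}}\restrict i$, whereas the paper builds this into the definition of ${f}_{\alpha}$ on each ${J}_{k}$ via the ${\tau}_{l}$'s; your formulation makes the role of the top-level string more transparent. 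Second, your bookkeeping enumerates each requirement infinitely often and picks one fresh point ${m}_{k}$ per listing, while the paper allots to each requirement an entire infinite block ${C}_{n}$ taken from a pairwise disjoint refinement of the witness sets; both schemes produce the needed infinite ${B}_{i,m,F}$ and are interchangeable.
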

\begin{proof}
 We write ${\sigma}_{\alpha, n, i}$ instead of ${\sigma}_{{f}_{J, \bar{A}, \alpha}, n, i}$ and ${f}_{\alpha}$ instead of ${f}_{J, \bar{A}, \alpha}$ to simplify the notation.
 We first define ${B}_{i, 0, \emptyset} = \omega \setminus i$, for all $i \in \omega$.
 Fix $\alpha < {\omega}_{1}$.
 Suppose that ${f}_{\xi}$ has been defined for all $\xi < \alpha$.
 Suppose also that ${B}_{i, m, F}$ has been defined for all triples $\langle i, m, F \rangle$ such that $i, m \in \omega$, $m \leq {2}^{i}$, and $F \in {\[\alpha\]}^{m}$.
 Let $\{\langle {i}_{n}, {m}_{n}, {F}_{n} \rangle: n \in \omega\}$ be a 1-1 enumeration of all triples $\langle i, m, F \rangle$ such that $i, m \in \omega$, $m < {2}^{i}$, and $F \in {\[\alpha\]}^{m}$.
 Let ${B}_{n}$ denote ${B}_{{i}_{n}, {m}_{n}, {F}_{n}}$.
 Find a sequence $\seq{C}{n}{\in}{\omega}$ such that ${C}_{n} \in {\[{B}_{n}\]}^{\omega}$ and $\forall n < m < \omega\[{C}_{n} \cap {C}_{m} = 0\]$.
 For each $\langle i, m, F \rangle$ with $i, m \in \omega$, $m < {2}^{i}$, and $F \in {\[\alpha\]}^{m}$, define ${B}_{i, m + 1, F \cup \{\alpha\}} = {C}_{n}$, where $n$ is the unique $n \in \omega$ such that $\langle {i}_{n}, {m}_{n}, {F}_{n} \rangle = \langle i, m, F \rangle$.
 Note that ${B}_{i, m + 1, F \cup \{\alpha\}} \in {\[\left( \omega \setminus i \right)\]}^{\omega}$.
 We now define ${f}_{\alpha} \in {\F}_{J, \bar{A}}$.
 Fix $k \in \omega$.
 Suppose first that $k \in {\bigcup}_{n \in \omega}{{C}_{n}}$.
 Let $n$ be the unique $n \in \omega$ such that $k \in {C}_{n}$.
 Since ${i}_{n} \leq k \leq {l}_{k}$, ${\sigma}_{\xi, k, {i}_{n}}$ is defined and belongs to ${2}^{{i}_{n}}$, for each $\xi \in {F}_{n}$.
 So $\{{\sigma}_{\xi, k, {i}_{n}}: \xi \in {F}_{n}\}$ is a subset of ${2}^{{i}_{n}}$ with cardinality less than ${2}^{{i}_{n}}$.
 So choose $\eta \in {2}^{{l}_{k}}$ such that $\eta \restrict {i}_{n} \notin \{{\sigma}_{\xi, k, {i}_{n}}: \xi \in {F}_{n}\}$.
 If $k \notin {\bigcup}_{n \in \omega}{{C}_{n}}$, then choose $\eta \in {2}^{{l}_{k}}$ to be arbitrary.
 In either case, for each $l < {l}_{k}$, define ${\tau}_{l} = {\left( \eta \restrict l \right)}^{\frown}{\langle 1 - \eta(l) \rangle}\in {2}^{l + 1}$ and define ${\tau}_{{l}_{k}} = \eta$.
 It is clear that for all $l < l' \leq {l}_{k}$, ${A}_{k, {\tau}_{l}} \cap {A}_{k, {\tau}_{l'}} = 0$ and that ${\bigcup}_{l\leq{l}_{k}}{{A}_{k, {\tau}_{l}}} = {J}_{k}$.
 Define ${f}_{\alpha}''{A}_{k, {\tau}_{l}} = \{l\}$, for each $l \leq {l}_{k}$.
 This completes the definition of ${f}_{\alpha}$.
 In the case when $k \in {C}_{n}$ for some $n \in \omega$, ${\sigma}_{\alpha, k, {i}_{n}} = \eta \restrict {i}_{n}$.
 One sees that ${f}_{\alpha} \in {\F}_{J, \bar{A}}$ and that for each $n \in \omega$, $\xi \in {F}_{n}$, and $k \in {C}_{n}$, ${\sigma}_{\alpha, k, {i}_{n}} \neq {\sigma}_{\xi, k, {i}_{n}}$.
 Now to verify clause (2) after stage $\alpha$ of the construction, suppose that $\langle i, m, F \rangle$ is a triple such that $i, m \in \omega$, $m \leq {2}^{i}$, and $F \in {\[\alpha + 1\]}^{m}$.
 If $F \in {\[\alpha\]}^{m}$, then clause (2) holds by the induction hypothesis.
 So assume that $\alpha \in F$ and let $G = F \setminus \{\alpha \}$.
 Let $n$ be the unique element of $\omega$ such that $\langle {i}_{n}, {m}_{n}, {F}_{n} \rangle = \langle i, m - 1, G \rangle$.
 Then ${B}_{i, m, F} = {C}_{n} \subset {B}_{n} = {B}_{i, m - 1, G}$.
 Take $\xi, \zeta \in F$ and $k \in {B}_{i, m, F}$.
 Suppose $\xi < \zeta$.
 If $\xi, \zeta \in G$, then since $k \in {B}_{i, m - 1, G}$, ${\sigma}_{\xi, k, i} \neq {\sigma}_{\zeta, k, i}$ holds by the induction hypothesis.
 If $\zeta = \alpha$, then ${\sigma}_{\zeta, k, i} = {\sigma}_{\alpha, k, i} = {\sigma}_{\alpha, k, {i}_{n}} \neq {\sigma}_{\xi, k, {i}_{n}} = {\sigma}_{\xi, k, i}$.
 This finishes the construction.
\end{proof}
Note that if $m = {2}^{i}$ and $F \in {\[{\omega}_{1}\]}^{m}$, then for any $k \in {B}_{i, m, F}$, $\{{\sigma}_{\alpha, k, i}: \alpha \in F\}$ is a subset of ${2}^{i}$ of size ${2}^{i}$.
Therefore ${\bigcup}_{\alpha \in F}{{A}_{k, {\sigma}_{\alpha, k, i}}} = {\bigcup}_{\sigma \in {2}^{i}}{A}_{k, \sigma} = {J}_{k}$.
This consequence of Lemma \ref{lem:omega1rec} will be used below.
\begin{Def} \label{def:thefamily}
 Let $D$ be a family of interval partitions as in Lemma \ref{lem:ipdom}. For each $J \in D$ fix a sequence ${\bar{A}}_{J} = \langle {A}_{J, k, \sigma}: k \in \omega \wedge \sigma \in {2}^{\leq{l}_{J, k}}\rangle$ as in Definition \ref{def:FJ}.
 Use Lemma \ref{lem:omega1rec} to fix a sequence of functions $\langle {f}_{J, {\bar{A}}_{J}, \alpha}: \alpha < {\omega}_{1}\rangle$ satisfying (1) and (2) of Lemma \ref{lem:omega1rec}.
 We will write ${f}_{J, \alpha}$ instead of ${f}_{J, {\bar{A}}_{J}, \alpha}$.
 For any tuple $\langle I, J, \alpha, l \rangle \in D \times D \times {\omega}_{1} \times \omega$, let ${Z}_{I, J, \alpha, l} = {\bigcup}_{k \in {I}_{l}}{\{x \in {J}_{k}: {f}_{J, \alpha}(x) \geq l \}}$.
 For each triple $\langle I, J, \alpha \rangle \in D \times D \times {\omega}_{1}$ define ${Z}_{I, J, \alpha} = {\bigcup}_{l \in \omega}{{Z}_{I, J, \alpha, l}} \subset \omega$.
 Note that the family $\{{Z}_{I, J, \alpha}: \langle I, J, \alpha \rangle \in D \times D \times {\omega}_{1}\}$ has size at most $\d$.
\end{Def}
\begin{Lemma} \label{label:family0}
 ${Z}_{I, J, \alpha} \in {\ZZZ}_{0}$ for every triple $\langle I, J, \alpha \rangle \in D \times D \times {\omega}_{1}$.
\end{Lemma}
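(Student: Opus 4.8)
The plan is to deduce the conclusion from Lemma~\ref{lem:regular}, applied to the interval partition $J$ and to the set $A = {Z}_{I, J, \alpha}$, after first rewriting ${Z}_{I, J, \alpha}$ in a more transparent ``pointwise'' form. The key observation is that, since both $I$ and $J$ are interval partitions, each $x \in \omega$ determines a unique $n(x) \in \omega$ with $x \in {J}_{n(x)}$, and each $m \in \omega$ determines a unique $l(m) \in \omega$ with $m \in {I}_{l(m)}$; I claim that $x \in {Z}_{I, J, \alpha}$ if and only if ${f}_{J, \alpha}(x) \geq l(n(x))$. Indeed, by Definition~\ref{def:thefamily}, $x \in {Z}_{I, J, \alpha}$ iff there are $l' \in \omega$ and $k \in {I}_{l'}$ with $x \in {J}_{k}$ and ${f}_{J, \alpha}(x) \geq l'$; but $x \in {J}_{k}$ forces $k = n(x)$, and then $k \in {I}_{l'}$ forces $l' = l(n(x))$, so this occurs exactly when ${f}_{J, \alpha}(x) \geq l(n(x))$.

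From this I would read off that for every $n \in \omega$ one has ${Z}_{I, J, \alpha} \cap {J}_{n} = \{ x \in {J}_{n}: {f}_{J, \alpha}(x) \geq l(n) \}$, where $l(n)$ is the unique index with $n \in {I}_{l(n)}$. Since ${f}_{J, \alpha} \in {\F}_{J, {\bar{A}}_{J}}$, the observations recorded right after Definition~\ref{def:FJ} apply, and give at once that $\frac{\lc {Z}_{I, J, \alpha} \cap {J}_{n} \rc}{\lc {J}_{n} \rc} \leq {2}^{-l(n)}$ and that any two distinct members of ${Z}_{I, J, \alpha} \cap {J}_{n}$ differ by more than ${2}^{l(n) - 1}$.

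It then remains only to check the hypotheses of Lemma~\ref{lem:regular} for $J$ and $A = {Z}_{I, J, \alpha}$. Fix $l \geq 0$ and put $N = {i}_{l}$, the left endpoint of the interval ${I}_{l}$. For any $n \geq N$, if $n \in {I}_{l'}$ then $l' \geq l$ (otherwise ${i}_{l' + 1} \leq {i}_{l} \leq n$, contradicting $n < {i}_{l' + 1}$), i.e.\ $l(n) \geq l$. Hence $\frac{\lc {Z}_{I, J, \alpha} \cap {J}_{n} \rc}{\lc {J}_{n} \rc} \leq {2}^{-l(n)} \leq {2}^{-l}$, which is clause~(1) of Lemma~\ref{lem:regular}, and for distinct $i, j \in {Z}_{I, J, \alpha} \cap {J}_{n}$ we get $\lc i - j \rc > {2}^{l(n) - 1} \geq {2}^{l - 1}$, which is clause~(2). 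Lemma~\ref{lem:regular} then yields ${Z}_{I, J, \alpha} \in {\ZZZ}_{0}$, as desired.

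There is no serious obstacle in this argument: it is essentially an unwinding of Definitions~\ref{def:FJ} and~\ref{def:thefamily} followed by a direct appeal to Lemma~\ref{lem:regular}. The one point requiring a moment's care is the pointwise reformulation in the first step, which uses crucially that both $I$ and $J$ are genuine interval partitions, so that the indices $k$ and $l'$ in the double union defining ${Z}_{I, J, \alpha}$ are uniquely determined by $x$; granting that, clauses~(1) and~(2) of Lemma~\ref{lem:regular} come essentially for free from the structural properties of the functions in ${\F}_{J, {\bar{A}}_{J}}$ noted immediately after Definition~\ref{def:FJ}.
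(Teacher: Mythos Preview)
Your argument is correct and is essentially the same as the paper's proof: both apply Lemma~\ref{lem:regular} to the interval partition $J$ with $N = {i}_{l}$, use that ${Z}_{I, J, \alpha} \cap {J}_{n} = \{x \in {J}_{n}: {f}_{J, \alpha}(x) \geq l(n)\}$, and invoke the structural properties of ${\F}_{J, {\bar{A}}_{J}}$ recorded after Definition~\ref{def:FJ}. You simply spell out the pointwise reformulation more carefully than the paper does, which asserts the identity for ${Z}_{I, J, \alpha} \cap {J}_{k}$ without comment.
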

\begin{proof}
 We apply Lemma \ref{lem:regular} with $J$ as $I$ and ${Z}_{I, J, \alpha}$ as $A$.
 Fix $l \geq 0$.
 Let $N = {i}_{l}$ and let $k \geq N$ be given.
 Let ${l}^{\ast}$ be the unique member of $\omega$ such that $k \in {I}_{{l}^{\ast}}$.
 Note that $l \leq {l}^{\ast}$.
 Now ${Z}_{I, J, \alpha} \cap {J}_{k} = \{x \in {J}_{k}: {f}_{J, \alpha}(x)\geq{l}^{\ast}\}$.
 Since ${f}_{J, \alpha} \in {\F}_{J, {\bar{A}}_{J}}$, $\frac{\lc {Z}_{I, J, \alpha} \cap {J}_{k} \rc}{\lc {J}_{k} \rc} \leq {2}^{-{l}^{\ast}} \leq {2}^{-l}$ and for any $i, j \in {Z}_{I, J, \alpha} \cap {J}_{k}$, if $i \neq j$, then $\lc i - j \rc > {2}^{{l}^{\ast} - 1} \geq {2}^{l - 1}$, exactly as needed. 
\end{proof}
\begin{Lemma} \label{lem:bound}
 Fix $J \in D$ and $\alpha \in {\omega}_{1}$.
 Suppose $A \subset \omega$ and suppose that for every $I \in D$, $A \cap {Z}_{I, J, \alpha}$ is finite.
 Then there exists $i \in \omega$ such that ${f}_{J, \alpha}''A \subset i$.
\end{Lemma}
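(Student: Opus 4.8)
The plan is to prove the contrapositive: assuming $f_{J,\alpha}''A$ is unbounded in $\omega$, I will produce a single $I\in D$ for which $A\cap Z_{I,J,\alpha}$ is infinite. The first step is a bookkeeping simplification that strips away the tree structure ${\bar A}_{J}$ and the functions $\sigma$ of Lemma \ref{lem:omega1rec}, neither of which plays any role here. For an interval partition $I$ and $k\in\omega$ let $l_{I}(k)$ be the unique $l$ with $k\in I_{l}$; since $I_{0}\cup\dots\cup I_{l_{I}(k)}=[0,i_{l_{I}(k)+1})$, one has $l_{I}(k)\le v\iff k<i_{v+1}$. Unwinding Definition \ref{def:thefamily}, and using that each $a\in\omega$ lies in exactly one block $J_{k}$ while that $k$ lies in exactly one $I_{l}$, one obtains the clean equivalence
\begin{equation*}
 a\in Z_{I,J,\alpha}\iff l_{I}(k)\le f_{J,\alpha}(a)\iff k<i_{f_{J,\alpha}(a)+1}\qquad\text{whenever }a\in J_{k}.
\end{equation*}
Thus whether $a$ lands in $Z_{I,J,\alpha}$ depends only on the index $k$ of its $J$-block and on the value $f_{J,\alpha}(a)$, and the task reduces to finding $I\in D$ whose interval endpoints $i_{v}$ grow fast enough.

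Next I exploit the unboundedness. Since $\{f_{J,\alpha}(a):a\in A\}$ is unbounded, hence infinite, I can fix distinct $a_{m}\in A$ ($m\in\omega$) with $v_{m}:=f_{J,\alpha}(a_{m})$ strictly increasing, and for each $m$ I let $k_{m}$ be the index with $a_{m}\in J_{k_{m}}$. By the equivalence above it suffices to find $I\in D$ with $i_{v_{m}+1}>k_{m}$ for infinitely many $m$. To force this, set $\phi(v)=\max\bigl(\{k_{m}+1:v_{m}\le v\}\cup\{0\}\bigr)$, which is finite because the $v_{m}$ are strictly increasing; build an interval partition $J'$ with $j'_{n}>\phi(n-2)$ for every $n\ge2$; and apply clause (3) of Lemma \ref{lem:ipdom} to obtain $I\in D$ with $\forallbutfin n\in\omega\,\exists k>n\,[J'_{k}\subseteq I_{n}]$. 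A short computation ($J'_{k}\subseteq I_{n}$ with $k>n$ forces $i_{n+1}\ge j'_{k+1}\ge j'_{n+2}$) then yields $i_{n+1}\ge j'_{n+2}>\phi(n)$ for all but finitely many $n$.

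Finally, since $v_{m}\to\infty$, for all sufficiently large $m$ I may instantiate $n=v_{m}$ in the last inequality and conclude $i_{v_{m}+1}>\phi(v_{m})\ge k_{m}+1>k_{m}$, whence $a_{m}\in Z_{I,J,\alpha}$ for every large $m$. As the $a_{m}$ are distinct, $A\cap Z_{I,J,\alpha}$ is infinite, contradicting the hypothesis; so $f_{J,\alpha}''A$ is bounded, i.e.\ $f_{J,\alpha}''A\subseteq i$ for some $i\in\omega$. I expect no deep obstacle: once the characterization of $Z_{I,J,\alpha}$ is isolated, the argument is essentially the familiar fact that a $\d$-sized family of interval partitions catches any single function. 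The only points needing care are recognizing that all the combinatorics of $\F_{J,{\bar A}_{J}}$ are irrelevant for this particular lemma, and choosing the auxiliary partition $J'$ so that the deliberately one-sided domination supplied by Lemma \ref{lem:ipdom}(3) nonetheless pushes the endpoints $i_{v_{m}+1}$ past the targets $k_{m}$.
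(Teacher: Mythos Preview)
Your proof is correct and follows essentially the same strategy as the paper's: argue by contrapositive, build an auxiliary interval partition from the unboundedness of $f_{J,\alpha}''A$, and invoke clause~(3) of Lemma~\ref{lem:ipdom} to find an $I\in D$ for which $A\cap Z_{I,J,\alpha}$ is infinite. Your preliminary extraction of the equivalence $a\in Z_{I,J,\alpha}\iff k<i_{f_{J,\alpha}(a)+1}$ (for $a\in J_k$) is a clean touch the paper leaves implicit, but the underlying mechanism is the same.
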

\begin{proof}
 Suppose not.
 Then for every $i \in \omega$, there exists $x \in A$ such that ${f}_{J, \alpha}(x) \geq i$.
 Define an interval partition $K$ as follows.
 ${k}_{0} = 0$.
 Given ${k}_{n}$, let $U = {f}_{J, \alpha}''\left( {\bigcup}_{k < {k}_{n}}{{J}_{k}}\right)$.
 Let $i = \max(U \cup \{n\}) + 1$.
 Find $x \in A$ such that ${f}_{J, \alpha}(x) \geq i$.
 Let ${k}^{\ast} \in \omega$ be such that $x \in {J}_{{k}^{\ast}}$.
 Note that ${k}_{n} \leq {k}^{\ast}$.
 Set ${k}_{n + 1} = {k}^{\ast} + 1$.
 This finishes the construction of $K$.
 By construction we have that $\forall n \in \omega \exists k \in {K}_{n} \exists x \in {J}_{k} \cap A \[{f}_{J, \alpha}(x) > n\]$.
 Find $I \in D$ and $N \in \omega$ such that $\forall l \geq N\exists n > l\[{K}_{n} \subset {I}_{l}\]$.
 It follows that $\forall l \geq N\[{Z}_{I, J, \alpha, l} \cap A \neq 0\]$.
 This contradicts the hypothesis that ${Z}_{I, J, \alpha} \cap A$ is finite.
\end{proof}
\begin{proof}[Proof of Theorem \ref{thm:main1}]
 The family $\{{Z}_{I, J, \alpha}: \langle I, J, \alpha \rangle \in D \times D \times {\omega}_{1}\}$ is a subset of ${\ZZZ}_{0}$ of cardinality at most $\d$.
 Suppose for a contradiction that $A \in \cube$ and that $A \cap {Z}_{I, J, \alpha}$ is finite for every $\langle I, J, \alpha \rangle \in D \times D \times {\omega}_{1}$.
 Fix $J \in D$ such that $\forallbutfin k \in \omega \[A \cap {J}_{k} \neq 0\]$.
 By Lemma \ref{lem:bound}, $\forall \alpha < {\omega}_{1} \exists {i}_{\alpha} \in \omega\[{f}_{J, \alpha}''A \subset {i}_{\alpha}\]$.
 There exist $i \in \omega$ and $S \in {\[{\omega}_{1}\]}^{{\omega}_{1}}$ such that $\forall \alpha \in S\[i = {i}_{\alpha}\]$.
 Let $m = {2}^{i}$ and $F \in {\[S\]}^{m}$.
 By the remark following Lemma \ref{lem:omega1rec}, there exists ${B}_{J, {\bar{A}}_{J}, i, m, F} \in \cube$ such that for every $k \in {B}_{J, {\bar{A}}_{J}, i, m, F}$, ${\bigcup}_{\alpha \in F}{ {A}_{J, k, {\sigma}_{J, {\bar{A}}_{J}, \alpha, k, i} } } = {J}_{k}$, where $\{x \in {J}_{k}: {f}_{J, \alpha}(x) \geq i\} = {A}_{J, k, {\sigma}_{J, {\bar{A}}_{J}, \alpha, k, i} }$.
 It follows that for each $k \in {B}_{J, {\bar{A}}_{J}, i, m, F}$, $A \cap {J}_{k} = 0$.
 However this contradicts the choice of $J$.
\end{proof}
\begin{Cor} \label{cor:main1}
Let $\V$ be any ground model and let $E \in \V$ be a dominating family of minimal size.
If $\P \in \V$ diagonalizes ${\ZZZ}_{0} \cap \V$, then $E$ is no longer a dominating family in ${\V}^{\P}$.
\end{Cor}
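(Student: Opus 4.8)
The plan is to build, inside $\V$, exactly the family of density-zero sets produced in the proof of Theorem \ref{thm:main1} from the dominating family $E$, and then to observe that its ``$\ast$-covering'' property survives into any generic extension in which $E$ remains dominating. Suppose $\P$ diagonalizes $\V \cap \ZZZ_0$ and fix a name $\mathring{A}$ with ${\forces}_{\P} \mathring{A} \in \cube$ and ${\forces}_{\P} \lc X \cap \mathring{A} \rc < \omega$ for every $X \in \V \cap \ZZZ_0$, as in Definition \ref{def:diagonalize}. Working in $\V$, feed $E$ into Lemma \ref{lem:ipdom} to obtain a family $D$ of interval partitions, and then carry out Definitions \ref{def:FJ} and \ref{def:thefamily} to obtain the family $\left\{ {Z}_{I, J, \alpha} : \langle I, J, \alpha \rangle \in D \times D \times {\omega}_{1} \right\}$. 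Since $D$, the ${\bar{A}}_{J}$, and the ${f}_{J, \alpha}$ all belong to $\V$, and each ${Z}_{I, J, \alpha}$ has density $0$ by Lemma \ref{label:family0}, this family is a subset of $\V \cap \ZZZ_0$; hence ${\forces}_{\P}$ ``$\lc \mathring{A} \cap {Z}_{I, J, \alpha} \rc < \omega$ for every $\langle I, J, \alpha \rangle \in D \times D \times {\omega}_{1}$''. (The hypothesis that $E$ has minimal size is used only to license taking $E$ as the dominating family in Lemma \ref{lem:ipdom}; otherwise it plays no role.)

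Now fix any generic $G$ and put $A = \mathring{A}[G] \in \cube$; I claim that $E$ cannot be dominating in $\V[G]$. Suppose it were. The point is that \emph{every} combinatorial object used in the proof of Theorem \ref{thm:main1} is constructed in $\V$ by a recipe absolute between $\V$ and $\V[G]$: $D$ is literally $\{{I}_{f} : f \in E\}$ with ${I}_{f}$ an absolute function of $f$, and the ${\bar{A}}_{J}$, the ${f}_{J, \alpha}$, the sets ${B}_{i, m, F}$ of Lemma \ref{lem:omega1rec}, and the ${Z}_{I, J, \alpha}$ are all fixed elements of $\V$; moreover Lemmas \ref{lem:tree}, \ref{lem:omega1rec} and \ref{label:family0} and the remark following Lemma \ref{lem:omega1rec} are arithmetic assertions about these fixed objects and so remain true in $\V[G]$. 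The only place where ``$E$ is dominating'' is genuinely invoked is clause (3) of Lemma \ref{lem:ipdom}, and in the proof of Theorem \ref{thm:main1} (including inside Lemma \ref{lem:bound}) it is applied only to interval partitions computed from $A$ and from the fixed data; since $E$ is dominating in $\V[G]$, re-running the short proof of Lemma \ref{lem:ipdom}(3) there shows $D$ still has property (3) in $\V[G]$ for all such partitions. Consequently the proof of Theorem \ref{thm:main1} goes through in $\V[G]$ applied to $A$, producing some $J \in D$ with $A \cap {J}_{k} \neq \emptyset$ for all large $k$ and then, via Lemma \ref{lem:bound}, some ${i}_{\alpha} \in \omega$ with ${f}_{J, \alpha}''A \subset {i}_{\alpha}$ for every $\alpha < {\omega}_{1}$.

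Here one small adjustment to the original argument is worth noting: rather than extracting a set $S \in {\[{\omega}_{1}\]}^{{\omega}_{1}}$ on which $\alpha \mapsto {i}_{\alpha}$ is constant, it suffices to let $i$ be any value attained infinitely often by $\alpha \mapsto {i}_{\alpha}$ --- such an $i$ exists merely because ${\omega}_{1}$ is infinite --- and to choose $F$ of size $m = {2}^{i}$ inside that infinite fibre; the remark after Lemma \ref{lem:omega1rec} then gives $A \cap {J}_{k} = \emptyset$ for all $k$ in the infinite set ${B}_{J, {\bar{A}}_{J}, i, m, F}$, contradicting the choice of $J$. In particular no hypothesis that $\P$ preserves ${\omega}_{1}$ is needed. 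This contradiction rules out $A$ avoiding every ${Z}_{I, J, \alpha}$ in $\V[G]$; but that was forced, so $E$ is not dominating in $\V[G]$, and since $G$ was arbitrary, $E$ is not dominating in ${\V}^{\P}$. The step demanding the most care is the absoluteness bookkeeping of the second paragraph: one must verify that every appeal to the dominating family along the chain of lemmas leading to Theorem \ref{thm:main1} is either to an object already secured in $\V$ or is an instance of Lemma \ref{lem:ipdom}(3) applied to an interval partition that lives in $\V[G]$, so that ``$E$ is dominating in $\V[G]$'' really is enough to reconstitute the whole argument there.
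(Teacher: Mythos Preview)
Your argument is correct, and it is essentially the intended one: the paper states Corollary~\ref{cor:main1} without proof, so the reader is meant to observe---as you do---that the entire apparatus of Lemmas~\ref{lem:ipdom}--\ref{lem:bound} and Definition~\ref{def:thefamily} is built in $\V$ from $E$, that membership in ${\ZZZ}_{0}$ and the combinatorial conclusions of Lemmas~\ref{lem:tree}, \ref{lem:omega1rec}, and~\ref{label:family0} are absolute, and that the only live use of ``$E$ is dominating'' in the proof of Theorem~\ref{thm:main1} is clause~(3) of Lemma~\ref{lem:ipdom} applied to interval partitions manufactured from $A$; hence if $E$ remained dominating in $\V[G]$ the contradiction would go through there.

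Your adjustment replacing the choice of $S \in {\[{\omega}_{1}\]}^{{\omega}_{1}}$ by a mere infinite fibre of $\alpha \mapsto {i}_{\alpha}$ is a genuine improvement over the paper's version of the argument: it shows the corollary holds even when $\P$ collapses ${\omega}_{1}$, which the paper's phrasing of the proof of Theorem~\ref{thm:main1} would not immediately give. Likewise your remark that the minimality of $\lc E \rc$ is inessential is correct---the construction in Lemma~\ref{lem:ipdom} works for any dominating family, the cardinality bound being irrelevant here.
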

The proof of Theorem \ref{thm:main1} can be adapted to cover several density ideals (see Definition 1.6 of \cite{michaelstarinv} for the exact definition of a density ideal).
By the results in \cite{michaelstarinv}, ${\cov}^{\ast}({\ZZZ}_{0}) \leq {\cov}^{\ast}(\I)$ for every density ideal $\I$.
Solecki and Todorcevic~\cite{soleckitodorcevic2} have introduced the more general notion of a density-like ideal.
\begin{Def} \label{def:densitylike}
 Let $\varphi: {2}^{\omega} \rightarrow \[0, \infty\]$ be a lower semi-continuous sub-measure.
 Let $\I = \mathrm{Exh}(\varphi) = \{X \subset \omega: {\lim}_{n \rightarrow \infty}\varphi(X \setminus n) = 0\}$.
 $\I$ is said to be \emph{density-like} if for every $\epsilon > 0$ there exists $\delta > 0$ such that for every sequence $\seq{F}{n}{\in}{\omega}$ of finite sets, if $\forall n \in \omega \[\varphi({F}_{n}) < \delta\]$, then there exists $a \in \cube$ such that $\varphi({\bigcup}_{n \in a}{{F}_{n}}) < \epsilon$.
\end{Def}
It would be of interest to see whether a similar bound on ${\cov}^{\ast}(\I)$ can be proved for all density-like ideals $\I$.
\section{The bounding and splitting numbers at uncountable cardinals} \label{sec:skbk}
 Theorem \ref{thm:main2} is proved in this section.
 We begin with an observation due to Suzuki~\cite{suzukisplitting}, which shows that $\kappa$ must be a large cardinal if ${\s}_{\kappa}$ is to be big and $\kappa$ is to be regular and uncountable.
 Though we will not need this observation, we include a proof below for completeness.
 Its converse is also true and was noted by Suzuki.
 \begin{Lemma}[Suzuki]\label{lem:wc}
  Let $\kappa > \omega$ be a regular cardinal.
  If ${\s}_{\kappa} > \kappa$, then $\kappa$ is weakly compact.
 \end{Lemma}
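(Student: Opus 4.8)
The plan is to prove the contrapositive: assuming $\kappa$ is regular uncountable but \emph{not} weakly compact, I would construct a splitting family of size $\kappa$, so that ${\s}_{\kappa} \leq \kappa$. Since $\kappa$ regular with ${\s}_{\kappa} > \kappa$ already forces $\kappa$ to be strongly inaccessible (one can quickly see ${\s}_{\kappa} \geq \kappa$ fails for successors and for singulars, and indeed one checks $2^{<\kappa} = \kappa$ is needed, else the sets of size $<\kappa$ do not suffice and a counting argument gives a small splitting family), I may as well assume $\kappa$ is inaccessible. Then, since $\kappa$ is not weakly compact, it fails to have the tree property: fix a $\kappa$-Aronszajn tree $T$, i.e.\ a tree of height $\kappa$ all of whose levels have size $<\kappa$ and which has no cofinal branch.

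The key step is to turn the branches of $T$ into a splitting family. Identify the underlying set of $T$ with $\kappa$ in such a way that each level ${T}_{\alpha}$ is an initial segment of the identification is \emph{not} required; rather, enumerate $T$ as $\{{t}_{\xi}: \xi < \kappa\}$ and for each node $t \in T$ let ${b}_{t} = \{\xi < \kappa: {t}_{\xi} <_{T} t\}$ be the set of (indices of) predecessors of $t$. The family to consider is $F = \{{b}_{t}: t \in T\}$, together with, for each level $\alpha < \kappa$, the set ${T}_{<\alpha}$ of nodes below level $\alpha$; this has size $\kappa$. Given any $b \in {[\kappa]}^{\kappa}$, I want some member of $F$ to split it. Suppose no ${T}_{<\alpha}$ splits $b$; since each ${T}_{<\alpha}$ has size $<\kappa$, this means $b \setminus {T}_{<\alpha}$ has size $\kappa$ for every $\alpha$, i.e.\ $b$ meets cofinally many levels of $T$. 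Now run a tree argument: by recursion on $\alpha < \kappa$ I try to build a node ${s}_{\alpha}$ on level $\alpha$ such that the subtree above ${s}_{\alpha}$ still contains $\kappa$-many indices of $b$ above each level; at a successor step I pass to an immediate successor of ${s}_{\alpha}$ that keeps this property, which exists because ${s}_{\alpha}$ has $<\kappa$ immediate successors and $\kappa$ is regular; at limit steps I need the branch built so far to have an upper bound in $T$, and if it does not I stop. If the recursion runs for all $\alpha < \kappa$ it produces a cofinal branch of $T$, contradicting Aronszajn-ness; hence it halts at some node $t$, and the point of halting is precisely that the two subtrees hanging off the two "halves" of the immediate successors of $t$ each catch $\kappa$-many indices of $b$ — which, after packaging those successors' downward closures appropriately, yields a ${b}_{t'} \in F$ splitting $b$. (The bookkeeping here can be arranged so that the obstruction "branch has no upper bound" is exactly where a splitter appears; alternatively, one colors pairs from ${[\kappa]}^{2}$ by whether they lie on a common branch and uses that $\kappa$ is not weakly compact via the partition characterization, but the tree-property route is cleaner.)

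The main obstacle I expect is making the halting step actually \emph{deliver} a splitter rather than merely a failure of the recursion: one has to be careful that when the branch $\langle {s}_{\xi}: \xi < \lambda\rangle$ has no upper bound, the relevant "two halves" are genuinely two \emph{disjoint} cofinal-in-$b$ pieces, and that this is witnessed by a single set in $F$. A clean way to handle this is to enrich $F$ slightly: instead of just predecessor-sets ${b}_{t}$, also throw in, for each node $t$ and each partition of its immediate-successor set into two blocks, the union of the downward closures of one block — there are still only $\kappa$-many such sets since each node has $<\kappa$ successors and $2^{<\kappa} = \kappa$ by inaccessibility. With this enriched $F$ of size $\kappa$, the recursion argument above shows every $b \in {[\kappa]}^{\kappa}$ is split, so $F$ is a splitting family and ${\s}_{\kappa} \leq \kappa$, completing the contrapositive.
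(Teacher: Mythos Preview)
Your proposal has a genuine gap. Every set you place in $F$ --- the predecessor sets ${b}_{t} = \{\xi : {t}_{\xi} <_{T} t\}$, the level sets ${T}_{<\alpha}$, and in the enriched version the unions of \emph{downward} closures of successor blocks --- has cardinality strictly less than $\kappa$, since each is contained in some ${T}_{<\beta}$ with $\beta < \kappa$ and $\lc {T}_{<\beta} \rc < \kappa$ by inaccessibility. But a set of size $< \kappa$ can never split a set of size $\kappa$: the intersection is automatically small. So your $F$ is not a splitting family for a trivial reason, and the recursion cannot rescue it. The fix is to use \emph{upward} cones ${A}_{t} = \{\xi : t \; {\leq}_{T} \; {t}_{\xi}\}$; then if no ${A}_{t}$ splits a given $b$, every node $t$ with $\lc b \cap {A}_{t} \rc = \kappa$ satisfies $b \; {\subset}^{\ast} \; {A}_{t}$, such nodes are pairwise comparable, and regularity of $\kappa$ pushes this chain through every level, producing a cofinal branch and the desired contradiction. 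Your description of the halting step is also confused: you say the recursion stops at a limit stage with no upper bound, yet then speak of the two halves of the immediate successors of a node $t$, which is a successor-stage picture.

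The paper takes a completely different and much shorter route: it verifies $\kappa \rightarrow {(\kappa)}^{2}_{2}$ directly. Given a coloring $c$, the $\kappa$-many sets ${K}_{\alpha, 0} = \{\beta > \alpha : c(\{\alpha,\beta\}) = 0\}$ cannot form a splitting family since ${\s}_{\kappa} > \kappa$, so some $x \in {\[\kappa\]}^{\kappa}$ satisfies $x \; {\subset}^{\ast} \; {K}_{\alpha, {i}_{\alpha}}$ for every $\alpha$; refine to constant ${i}_{\alpha} = i$ on a set $y \in {\[x\]}^{\kappa}$, and a straightforward diagonal recursion inside $y$ builds a homogeneous set of size $\kappa$. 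This avoids Aronszajn trees entirely and needs no separate reduction to the inaccessible case.
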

 \begin{proof}
  We show that $\kappa \rightarrow {(\kappa)}^{2}_{2}$.
  Let $c: {\[\kappa\]}^{2} \rightarrow 2$ be a coloring.
  For each $\alpha < \kappa$ and $i \in 2$, let ${K}_{\alpha, i} = \{\beta > \alpha: c(\{\alpha, \beta\}) = i\}$.
  Since $\{{K}_{\alpha, 0}: \alpha < \kappa\}$ is not a splitting family, there exists $x \in {\[\kappa\]}^{\kappa}$ such that $\forall \alpha < \kappa\exists {i}_{\alpha} \in 2\[x \; {\subset}^{\ast} \; {K}_{\alpha, {i}_{\alpha}}\]$.
  Find $y \in {\[x\]}^{\kappa}$ and $i \in 2$ such that $\forall \gamma \in y\[{i}_{\gamma} = i\]$.
  Define a sequence $\langle {\gamma}_{\alpha}: \alpha < \kappa \rangle \subset y$ by induction on $\alpha < \kappa$ as follows.
  Fix $\alpha < \kappa$ and suppose that $\seq{\gamma}{\xi}{<}{\alpha} \subset y$ is given.
  For each $\xi < \alpha$, there exists ${\delta}_{\xi} < \kappa$ such that $x \setminus {\delta}_{\xi} \subset {K}_{{\gamma}_{\xi}, i}$.
  Put $\delta = \sup\{{\delta}_{\xi}: \xi < \alpha\} < \kappa$ and ${\gamma}_{\alpha} = \min(y \setminus \delta)$.
  It is clear that for each $\xi < \alpha$, ${\gamma}_{\xi} < {\gamma}_{\alpha}$ and $c\left( \left\{ {\gamma}_{\xi}, {\gamma}_{\alpha} \right\} \right) = i$.
  Therefore $\{{\gamma}_{\alpha}: \alpha < \kappa\}$ is a homogeneous set of cardinality $\kappa$ for the color $i$.
 \end{proof}
 \begin{Def} \label{def:modD}
  Let $\kappa > \omega$ be regular and suppose that there exists a cardinal $\lambda$ such that $\kappa < \lambda < {\s}_{\kappa}$.
  Fix a sufficiently large regular cardinal $\theta$ ($\theta = {\left( {2}^{{2}^{{\s}_{\kappa}}} \right)}^{+}$ will suffice).
  Let $M \prec H(\theta)$ be such that $\lambda \subset M$ and $\lc M \rc = \lambda$.
  $M \cap \Pset(\kappa)$ is not a splitting family.
  So there exists ${A}_{\ast} \in {\[\kappa\]}^{\kappa}$ such that for all $x \in M \cap \Pset(\kappa)$ either ${A}_{\ast} \; {\subset}^{\ast} \; \left( \kappa \setminus x \right)$ or ${A}_{\ast} \; {\subset}^{\ast} \; x$.
  Define $D$ to be $\{x \in \Pset(\kappa): {A}_{\ast} \; {\subset}^{\ast} \; x \}$.
  For any $f, g \in M \cap {\kappa}^{\kappa}$, define $f \; {\sim}_{D} \; g$ iff $\{\alpha < \kappa: f(\alpha) = g(\alpha)\} \in D$.
  This is an equivalence relation on $M \cap {\kappa}^{\kappa}$.
  For $f \in M \cap {\kappa}^{\kappa}$, let ${\[f\]}_{D} = \{g \in M \cap {\kappa}^{\kappa}: f \; {\sim}_{D} \; g\}$.
  For $f, g \in M \cap {\kappa}^{\kappa}$, define ${\[f\]}_{D}\; {<}_{D} \; {\[g\]}_{D}$ iff $\{\alpha < \kappa: f(\alpha) < g(\alpha)\} \in D$.
  Let $L = \{{\[f\]}_{D}: f \in M \cap {\kappa}^{\kappa}\}$.
  
  Let $i: \kappa \rightarrow \kappa$ be the identity function on $\kappa$.
  For each $\alpha \in \kappa$, let ${c}_{\alpha}: \kappa \rightarrow \kappa$ be the function such that ${c}_{\alpha}(\beta) = \alpha$, for all $\beta \in \kappa$.
  Note that $i, {c}_{\alpha} \in M \cap {\kappa}^{\kappa}$, for all $\alpha \in \kappa$.
 \end{Def}
 We observe that $D$ is a $\kappa$-complete filter on $\kappa$.
 Also if $X \in D$, then $\lc X \rc = \kappa$.
 Next we note that for any $\delta < \kappa$, $\left( \kappa \setminus \delta \right) \in D$.
 Finally if $0 < \delta < \kappa$ and $\seq{X}{\alpha}{<}{\delta} \in M$ is a partition of $\kappa$, then there is a unique $\alpha < \delta$ such that ${X}_{\alpha} \in D$.
 To see this note that $\delta \subset \kappa \subset \lambda \subset M$, and so $\{{X}_{\alpha}: \alpha < \delta\} \subset M$.
 For each $\alpha < \delta$ there exists ${i}_{\alpha} \in 2$ such that ${A}_{\ast} \; {\subset}^{\ast} \; {X}^{{i}_{\alpha}}_{\alpha}$, where ${X}^{0}_{\alpha} = {X}_{\alpha}$ and ${X}^{1}_{\alpha} = \kappa \setminus {X}_{\alpha}$.
 Thus $\{{X}^{{i}_{\alpha}}_{\alpha}: \alpha < \delta\} \subset D$, and by the $\kappa$-completeness of $D$, $X = \bigcap \{{X}^{{i}_{\alpha}}_{\alpha}: \alpha < \delta\} \in D$.
 So $\lc X \rc = \kappa$.
 By hypothesis, if $\alpha < \beta < \delta$, then ${X}_{\alpha} \cap {X}_{\beta} = 0$, and also $\bigcap \{{X}^{1}_{\alpha}: \alpha < \delta\} = 0$.
 It follows that ${i}_{\alpha} = 0$ for some unique $\alpha < \delta$.
 \begin{Lemma} \label{lem:sup}
  The structure $\pr{L}{{<}_{D}}$ is a linear order.
  Moreover $\{{\[{c}_{\alpha}\]}_{D}: \alpha < \kappa \}$ has a least upper bound in $L$.
 \end{Lemma}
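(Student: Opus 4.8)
The plan is to prove the two assertions separately, using throughout the properties of $D$ recorded in the paragraph following Definition \ref{def:modD}: $D$ is a $\kappa$-complete filter containing $\left( \kappa \setminus \delta \right)$ for every $\delta < \kappa$, and whenever $\seq{X}{\alpha}{<}{\delta} \in M$ is a partition of $\kappa$ with $0 < \delta < \kappa$ there is a unique $\alpha < \delta$ with ${X}_{\alpha} \in D$.

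First I would check that $\pr{L}{{<}_{D}}$ is a linear order. For well-definedness of ${<}_{D}$ on $D$-classes: if $f \; {\sim}_{D} \; f'$, $g \; {\sim}_{D} \; g'$, and $\{\alpha < \kappa: f(\alpha) < g(\alpha)\} \in D$, then intersecting this set with $\{\alpha < \kappa: f(\alpha) = f'(\alpha)\}$ and $\{\alpha < \kappa: g(\alpha) = g'(\alpha)\}$, both in $D$, produces a subset of $\{\alpha < \kappa: f'(\alpha) < g'(\alpha)\}$, which is therefore in $D$. Irreflexivity and asymmetry of ${<}_{D}$ are immediate from $\emptyset \notin D$, and transitivity follows from closure of $D$ under finite intersections. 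For trichotomy, fix $f, g \in M \cap {\kappa}^{\kappa}$; the three pairwise disjoint sets $\{\alpha < \kappa: f(\alpha) < g(\alpha)\}$, $\{\alpha < \kappa: f(\alpha) = g(\alpha)\}$, $\{\alpha < \kappa: f(\alpha) > g(\alpha)\}$ partition $\kappa$ into fewer than $\kappa$ pieces, and this partition lies in $M$ because $f, g \in M$. Hence exactly one of the three sets is in $D$, which says precisely that exactly one of ${\[f\]}_{D} <_{D} {\[g\]}_{D}$, ${\[f\]}_{D} = {\[g\]}_{D}$, ${\[g\]}_{D} <_{D} {\[f\]}_{D}$ holds.

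For the least upper bound, the key point is that $\pr{L}{{<}_{D}}$ is in fact well-founded, and a linear order with no infinite descending chain is a well-order. To establish well-foundedness, suppose $\langle {\[{f}_{n}\]}_{D}: n < \omega \rangle$ were strictly ${<}_{D}$-decreasing; then each ${X}_{n} = \{\alpha < \kappa: {f}_{n + 1}(\alpha) < {f}_{n}(\alpha)\}$ lies in $D$, and since $D$ is $\kappa$-complete and $\omega < \kappa$ we get $\bigcap_{n < \omega}{{X}_{n}} \in D$, so this intersection is nonempty; any $\alpha$ in it yields a strictly decreasing $\omega$-sequence of ordinals $\langle {f}_{n}(\alpha): n < \omega \rangle$, which is absurd. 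Note this uses only that each individual ${X}_{n}$ is in $D$, not that the sequence $\langle {f}_{n}: n < \omega \rangle$ belongs to $M$.

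Finally I would observe that $\{{\[{c}_{\alpha}\]}_{D}: \alpha < \kappa\}$ has an upper bound in $L$, namely ${\[i\]}_{D}$: for each $\alpha < \kappa$ we have $\{\beta < \kappa: {c}_{\alpha}(\beta) < i(\beta)\} = \kappa \setminus \left( \alpha + 1 \right) \in D$, so ${\[{c}_{\alpha}\]}_{D} <_{D} {\[i\]}_{D}$. Since $\pr{L}{{<}_{D}}$ is a well-order, the nonempty set of upper bounds of $\{{\[{c}_{\alpha}\]}_{D}: \alpha < \kappa\}$ in $L$ has a least element, and that element is the required least upper bound. The main obstacle, to my mind, is a conceptual one rather than a computational one: one should resist trying to build the bound by a direct diagonalization over the at most $\lambda$ functions of $M$, since the natural diagonal bound need only dominate a given upper bound on ${A}_{\ast}$ off a club that is not a member of $M$, so $D$ fails to control it; the well-foundedness route avoids this and requires nothing beyond the $\kappa$-completeness of $D$.
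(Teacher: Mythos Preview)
Your proof is correct and follows essentially the same approach as the paper: both show $[i]_{D}$ is an upper bound and then use the $\kappa$-completeness of $D$ to rule out an infinite ${<}_{D}$-descending sequence, obtaining a contradiction from an infinite descending chain of ordinals at a single coordinate. The only difference is presentational---you state well-foundedness of $\pr{L}{{<}_{D}}$ in full generality and then apply it, whereas the paper runs the descending-sequence contradiction directly on the set of upper bounds (and remarks immediately afterward that the same argument yields well-foundedness).
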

 \begin{proof}
  The relation ${<}_{D}$ is transitive because $D$ is a filter.
  Given $f, g \in M \cap {\kappa}^{\kappa}$, the sets $\{\alpha < \kappa: f(\alpha) = g(\alpha)\}$, $\{\alpha < \kappa: f(\alpha) < g(\alpha)\}$, and $\{\alpha < \kappa: g(\alpha) < f(\alpha)\}$ all belong to $M$ and they partition $\kappa$.
  So by the remarks above, exactly one of them belongs to $D$, whence exactly one of ${\[f\]}_{D} = {\[g\]}_{D}$, ${\[f\]}_{D} \; {<}_{D} \; {\[g\]}_{D}$, or ${\[g\]}_{D} \; {<}_{D} \; {\[f\]}_{D}$ holds.
  For the second statement note that ${\[i\]}_{D} \in L$ and is an upper bound of $\{{\[{c}_{\alpha}\]}_{D}: \alpha < \kappa\}$.
  If there is no least upper bound, then we can get a sequence $\seq{f}{n}{\in}{\omega} \subset M \cap {\kappa}^{\kappa}$ such that for each $n \in \omega$, ${\[{f}_{n}\]}_{D} \in L$ is an upper bound of $\{{\[{c}_{\alpha}\]}_{D}: \alpha < \kappa \}$ and ${\[{f}_{n + 1}\]}_{D} \; {<}_{D} \; {\[{f}_{n}\]}_{D}$.
  Thus for each $n \in \omega$, ${X}_{n} = \{\beta < \kappa: {f}_{n + 1}(\beta) < {f}_{n}(\beta)\} \in D$.
  By $\kappa$-completeness of $D$, $X = {\bigcap}_{n \in \omega}{{X}_{n}} \in D$, and so $X \neq 0$.
  Choosing $\beta \in X$, we get an infinite descending sequence of ordinals ${f}_{0}(\beta) > {f}_{1}(\beta) > \dotsb$, which is a contradiction.
 \end{proof}
 Of course the argument of Lemma \ref{lem:sup} shows that $\pr{L}{{<}_{D}}$ is a well-order.
 But we will not need this in what follows.
 One can also take the reduced power of the structure $\pr{M}{\in}$ with respect to the filter $D$.
 The above argument shows that this structure will be well-founded.
 It also possible to argue for Theorem \ref{thm:main2} in terms of the resulting embedding, which may not be elementary.
 Similar ideas were used by Zapletal in \cite{jindrasplitting} to prove his result there that the statement ${\s}_{\kappa} > {\kappa}^{+}$ has large consistency strength.
 The proof we give below avoids dealing with the reduced power of $\pr{M}{\in}$.
 \begin{Def} \label{def:fast}
  Fix a function ${f}_{\ast} \in M \cap {\kappa}^{\kappa}$ such that ${\[{f}_{\ast}\]}_{D} \in L$ is a least upper bound of $\{{\[{c}_{\alpha}\]}_{D}: \alpha < \kappa\}$.  
 \end{Def}
 \begin{Lemma} \label{lem:normal}
  If $C \in M$ is a club in $\kappa$, then ${f}^{-1}_{\ast}(C) \in D$.
 \end{Lemma}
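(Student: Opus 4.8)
The plan is to run the classical argument that a least upper bound of the constant functions modulo a filter is \emph{normal}: every club coded in $M$ pulls back under ${f}_{\ast}$ to a set in $D$.

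First I would extract two facts from Lemma~\ref{lem:sup} and Definition~\ref{def:fast}. (i) For $\alpha<\kappa$ one has ${\[{c}_{\alpha}\]}_{D}\;{<}_{D}\;{\[{c}_{\alpha+1}\]}_{D}$ (since $\{\beta<\kappa:\alpha<\alpha+1\}=\kappa\in D$), so $\{{\[{c}_{\alpha}\]}_{D}:\alpha<\kappa\}$ has no ${<}_{D}$-largest element; hence its least upper bound ${\[{f}_{\ast}\]}_{D}$ satisfies ${\[{c}_{\alpha}\]}_{D}\;{<}_{D}\;{\[{f}_{\ast}\]}_{D}$ for all $\alpha<\kappa$, i.e.\ $\{\beta<\kappa:{f}_{\ast}(\beta)>\alpha\}\in D$ for every $\alpha<\kappa$. (ii) If $g\in M\cap{\kappa}^{\kappa}$ and ${\[g\]}_{D}$ is an upper bound of $\{{\[{c}_{\alpha}\]}_{D}:\alpha<\kappa\}$, then, ${\[{f}_{\ast}\]}_{D}$ being a least upper bound, ${\[{f}_{\ast}\]}_{D}\;{<}_{D}\;{\[g\]}_{D}$ or ${\[{f}_{\ast}\]}_{D}={\[g\]}_{D}$, and in either case $\{\beta<\kappa:{f}_{\ast}(\beta)\leq g(\beta)\}\in D$.

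Now fix a club $C\in M$ in $\kappa$. Since ${f}_{\ast},C\in M$, the set ${f}_{\ast}^{-1}(C)=\{\beta<\kappa:{f}_{\ast}(\beta)\in C\}$ belongs to $M\cap\Pset(\kappa)$, so by the defining property of ${A}_{\ast}$ either ${f}_{\ast}^{-1}(C)\in D$, and we are done, or ${f}_{\ast}^{-1}(\kappa\setminus C)\in D$; assume the latter towards a contradiction. Define $g:\kappa\to\kappa$ by $g(\beta)=\sup\left(C\cap{f}_{\ast}(\beta)\right)$, with the convention $\sup\emptyset=0$; since $g$ is definable in $H(\theta)$ from $C$ and ${f}_{\ast}$, which lie in $M$, we have $g\in M\cap{\kappa}^{\kappa}$. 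I would then verify two claims. \emph{${\[g\]}_{D}$ is an upper bound of $\{{\[{c}_{\alpha}\]}_{D}:\alpha<\kappa\}$:} given $\alpha<\kappa$, pick (using that $C$ is unbounded) some $\gamma\in C$ with $\gamma>\alpha$; whenever ${f}_{\ast}(\beta)>\gamma$ we have $\gamma\in C\cap{f}_{\ast}(\beta)$, so $g(\beta)\geq\gamma>\alpha$, and by (i) the set of such $\beta$ is in $D$. \emph{${\[g\]}_{D}\;{<}_{D}\;{\[{f}_{\ast}\]}_{D}$:} by (i), by the assumption towards a contradiction, and by the $\kappa$-completeness of $D$, the set $Y=\{\beta<\kappa:\min C<{f}_{\ast}(\beta)\ \wedge\ {f}_{\ast}(\beta)\notin C\}$ is in $D$; for $\beta\in Y$ the set $C\cap{f}_{\ast}(\beta)$ is a nonempty subset of $C$, so $g(\beta)=\sup\left(C\cap{f}_{\ast}(\beta)\right)$ is $\leq{f}_{\ast}(\beta)$ and, by the closedness of $C$, lies in $C$; since ${f}_{\ast}(\beta)\notin C$ this forces $g(\beta)<{f}_{\ast}(\beta)$, so $\{\beta<\kappa:g(\beta)<{f}_{\ast}(\beta)\}\in D$. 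The two claims contradict (ii), since $D$ is a proper filter. Hence ${f}_{\ast}^{-1}(\kappa\setminus C)\notin D$, and therefore ${f}_{\ast}^{-1}(C)\in D$.

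Modulo Lemma~\ref{lem:sup} this argument is routine; the only mildly delicate point is the treatment of those $\beta$ with ${f}_{\ast}(\beta)\leq\min C$ or ${f}_{\ast}(\beta)\in C$ in the definition of $g$, which is handled simply by discarding them, since all the estimates that matter take place on the single $D$-large set $Y$, together with the (immediate, from elementarity) verification that $g$ genuinely belongs to $M$.
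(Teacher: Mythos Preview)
Your proof is correct and follows essentially the same route as the paper's: assume ${f}_{\ast}^{-1}(\kappa\setminus C)\in D$, define the ``drop to $C$'' function $g(\beta)=\sup(C\cap {f}_{\ast}(\beta))$, and derive a contradiction by showing that ${\[g\]}_{D}$ is an upper bound of the constants yet ${\[g\]}_{D}\;{<}_{D}\;{\[{f}_{\ast}\]}_{D}$. The only cosmetic differences are that the paper defines its function piecewise (setting it to $0$ off a convenient $D$-large set in $M$) and argues $g(\beta)<{f}_{\ast}(\beta)$ directly from ${f}_{\ast}(\beta)\notin C$ rather than via $g(\beta)\in C$, but the substance is identical.
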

 \begin{proof}
  Suppose for a contradiction that ${f}^{-1}_{\ast}(C) \notin D$.
  Since ${f}_{\ast}, C, \kappa \in M$, both ${f}^{-1}_{\ast}(C)$ and $\kappa \setminus {f}^{-1}_{\ast}(C) = {f}^{-1}_{\ast}(\kappa \setminus C)$ belong to $M$ and partition $\kappa$.
  Therefore, ${X}_{0} = {f}^{-1}_{\ast}(\kappa \setminus C) \in D$.
  Next since ${\[{c}_{0}\]}_{D} \; {<}_{D} \; {\[{f}_{\ast}\]}_{D}$, ${X}_{1} = \{\beta < \kappa: 0 < {f}_{\ast}(\beta)\} \in D$.
  Thus $X = {X}_{0} \cap {X}_{1} \in M \cap D$.
  Define a function $f: \kappa \rightarrow \kappa$ as follows.
  For any $\alpha \in \kappa$, if $\alpha \in X$, then let $f(\alpha) = \sup(C \cap {f}_{\ast}(\alpha))$, and let $f(\alpha) = 0$ otherwise.
  If $\alpha \in X$, then since ${f}_{\ast}(\alpha) \notin C$, $f(\alpha)=\sup(C \cap {f}_{\ast}(\alpha)) < {f}_{\ast}(\alpha)$.
  It is clear that $f \in M$.
  Thus ${\[f\]}_{D} \in L$ and ${\[f\]}_{D} \; {<}_{D} \; {\[{f}_{\ast}\]}_{D}$.
  On the other hand consider any $\alpha < \kappa$.
  Fix $\delta \in C$ with $\delta > \alpha$.
  Since ${\[{c}_{\delta}\]}_{D}\; {<}_{D} \; {\[{f}_{\ast}\]}_{D}$, $Y = \{\beta < \kappa: \delta < {f}_{\ast}(\beta)\} \in D$.
  So $Z = X \cap Y \in D$ and $\forall \beta \in Z\[{c}_{\alpha}(\beta) = \alpha < f(\beta)\]$, whence ${\[{c}_{\alpha}\]}_{D} \; {<}_{D} \; {\[f\]}_{D}$.
  However this contradicts the choice of ${f}_{\ast}$.
 \end{proof}
 \begin{Lemma} \label{lem:bounded}
 $M \cap {\kappa}^{\kappa}$ is bounded.
 \end{Lemma}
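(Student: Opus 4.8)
The plan is to show that $M \cap {\kappa}^{\kappa}$ is bounded by (a small modification of) the increasing enumeration of ${A}_{\ast}$, the nontrivial half being a contradiction argument that feeds on Lemma \ref{lem:normal}. First a reduction: replacing each $h \in M \cap {\kappa}^{\kappa}$ by $\bar{h}(\alpha) = \sup\left(\{h(\beta):\beta \leq \alpha\} \cup \{\alpha\}\right)+1$, which is strictly increasing, lies in $M$, and dominates $h$ everywhere, it is enough to bound the non-decreasing members of $M \cap {\kappa}^{\kappa}$. Let $e \in {\kappa}^{\kappa}$ be the increasing enumeration of ${A}_{\ast}$; then $e \notin M$, since $\ran(e) = {A}_{\ast}$ and ${A}_{\ast} \notin M$ (otherwise a $B \in M$ with $B \subset {A}_{\ast}$ and $\lc B \rc = \lc {A}_{\ast} \setminus B \rc = \kappa$ would split ${A}_{\ast}$). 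A non-decreasing $h \in M$ that is bounded modulo $D$ by a constant $\gamma$, i.e.\ $\{\alpha : h(\alpha) \leq \gamma\} \in D$, satisfies $h(a) \leq \gamma$ for all but boundedly many $a \in {A}_{\ast}$ --- since every member of $D$ almost contains ${A}_{\ast}$ --- hence, by monotonicity, $h(\beta) \leq \gamma$ for \emph{every} $\beta$, so such an $h$ is trivially ${<}^{\ast} e$. Thus the whole problem reduces to showing that no non-decreasing, unbounded $f \in M \cap {\kappa}^{\kappa}$ has $f \; {\not<}^{\ast} \; e$.

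Suppose toward a contradiction that $f \in M$ is non-decreasing and unbounded with $W := \{\xi < \kappa : f(\xi) \geq e(\xi)\}$ of size $\kappa$. Put $p(\gamma) = \min\{\xi < \kappa : f(\xi) \geq \gamma\}$. Then $p \in M$, $p$ is non-decreasing, and for each ${\gamma}_{0} < \kappa$ the set $\{\alpha : p(\alpha) \leq {\gamma}_{0}\}$ equals $\{\alpha : \alpha \leq f({\gamma}_{0})\}$, which is bounded in $\kappa$ and hence not in $D$; so $\{\alpha : p(\alpha) > {\gamma}_{0}\} \in D$ for every ${\gamma}_{0} < \kappa$, that is, $p$ lies above every constant modulo $D$. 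On the other hand ${A}_{\ast}$ is not co-bounded in $\kappa$ (else it would split some $x \in M$ with $x$ and $\kappa \setminus x$ both unbounded), so $e(\xi) > \xi$ for all large $\xi$; for such $\xi \in W$ one gets $p(e(\xi)) \leq \xi < e(\xi)$, whence $\{a \in {A}_{\ast} : p(a) < a\}$ has size $\kappa$, and since $\{\alpha : p(\alpha) < \alpha\} \in M$ and ${A}_{\ast}$ reaps $M$ this set must lie in $D$ --- so $p$ is regressive modulo $D$. Now $\{\gamma : p(\gamma) < \gamma\}$ is precisely the complement of the club ${C}_{f} = \{\delta : f''\delta \subseteq \delta\} \in M$ of closure points of $f$, so ${C}_{f} \notin D$, while Lemma \ref{lem:normal} gives ${f}^{-1}_{\ast}({C}_{f}) \in D$. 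Combining this with the fact that ${\[{f}_{\ast}\]}_{D}$ is a \emph{least} upper bound of the constants (Definition \ref{def:fast}) and that ${A}_{\ast}$ decides \emph{every} member of $M \cap \Pset(\kappa)$, one turns this collision into the desired contradiction.

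The hard part is exactly that last step. Lemma \ref{lem:normal} only tells us that ${f}_{\ast}$ pulls clubs of $M$ \emph{back} into $D$; since $D$ is merely $\kappa$-complete and ultra \emph{within} $M$, and ${f}_{\ast}$ need not equal $i$ modulo $D$, one cannot read off that ${C}_{f}$ itself lies in $D$. What is really needed is that $D$ is normal inside $M$ --- so that a regressive-modulo-$D$ function of $M$, being then bounded modulo $D$ by a constant (by the $\kappa$-completeness of $D$ inside $M$), would have to be constant modulo $D$, contradicting the properties of $p$ above. Securing this is the delicate core of the argument: one has to play ${f}_{\ast}$, $f$, and $p$ off against one another using the least-upper-bound property, and it is likely that one must first iterate the construction of Definition \ref{def:modD}, closing $M$ under the choice of reaping sets, so as to force ${f}_{\ast} = i$ modulo $D$. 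Once Lemma \ref{lem:bounded} is in hand, Theorem \ref{thm:main2} follows: in the situation of Definition \ref{def:modD} the model $M$, being elementary with $\lambda \subset M$, automatically contains an unbounded family of size ${\b}_{\kappa}$ whenever ${\b}_{\kappa} \leq \lambda$, so boundedness of $M \cap {\kappa}^{\kappa}$ forces ${\b}_{\kappa} > \lambda$ for every cardinal $\lambda$ with $\kappa < \lambda < {\s}_{\kappa}$, and therefore ${\s}_{\kappa} \leq {\b}_{\kappa}$.
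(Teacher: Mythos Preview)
Your argument is incomplete, and you acknowledge this yourself in the final paragraph. The gap is not a technicality: the claim that the increasing enumeration $e$ of ${A}_{\ast}$ bounds $M \cap {\kappa}^{\kappa}$ is false in general. Nothing in the setup prevents ${A}_{\ast}$ from consisting entirely of successor ordinals (if $A$ reaps $M$ then so does $\{\alpha + 1: \alpha \in A\}$, since for $x \in M$ the shift $\{\alpha: \alpha + 1 \in x\}$ also lies in $M$), in which case ${A}_{\ast}$ is disjoint from the club of limit ordinals and your contradiction argument merely produces the true, harmless statement ``some club ${C}_{f} \in M$ fails to lie in $D$''. Your observation that ${\[p\]}_{D}$ is an upper bound of the constants only places ${\[p\]}_{D}$ at or above ${\[{f}_{\ast}\]}_{D}$ and strictly below ${\[i\]}_{D}$; there is nothing contradictory to extract from that together with ${f}^{-1}_{\ast}({C}_{f}) \in D$, and the proposed remedy of iterating the construction to force ${\[{f}_{\ast}\]}_{D} = {\[i\]}_{D}$ is both vague and unnecessary.

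The correct fix is much simpler and is precisely what the paper does: work with ${f}_{\ast}''{A}_{\ast}$ instead of ${A}_{\ast}$. Lemma~\ref{lem:normal} says exactly that ${f}^{-1}_{\ast}(C) \in D$, i.e.\ ${A}_{\ast} \; {\subset}^{\ast} \; {f}^{-1}_{\ast}(C)$, for every club $C \in M$; hence ${f}_{\ast}''{A}_{\ast} \; {\subset}^{\ast} \; C$ for every such $C$. One checks (using ${\[{c}_{\alpha}\]}_{D} \; {<}_{D} \; {\[{f}_{\ast}\]}_{D}$ for each $\alpha$) that ${f}_{\ast}''{A}_{\ast}$ is unbounded in $\kappa$. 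Now let $g$ be its increasing enumeration and put $h(\alpha) = g(\alpha+1)$. For any $f \in M \cap {\kappa}^{\kappa}$ the club ${C}_{f}$ of closure points of $f$ lies in $M$, so $\left({f}_{\ast}''{A}_{\ast}\right) \setminus \delta \subset {C}_{f}$ for some $\delta < \kappa$; then for $\alpha \geq \delta$ one has $h(\alpha) = g(\alpha+1) \geq \alpha + 1 > \delta$, so $g(\alpha+1) \in {C}_{f}$, and $\alpha < g(\alpha+1)$ gives $f(\alpha) < h(\alpha)$. That is the entire argument: direct, no contradiction, no iteration. The function ${f}_{\ast}$ is already doing the job you were hoping to make the identity do.
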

 \begin{proof}
 First note that ${f}_{\ast}''{A}_{\ast}$ is an unbounded subset of $\kappa$.
 This is because for any $\alpha < \kappa$, $\{\beta < \kappa: \alpha < {f}_{\ast}(\beta)\} \in D$, whence ${A}_{\ast} \; {\subset}^{\ast} \; \{\beta < \kappa: \alpha < {f}_{\ast}(\beta)\}$.
 So we can find $\beta \in {A}_{\ast}$ with $\alpha < {f}_{\ast}(\beta)$ because $\lc {A}_{\ast} \rc = \kappa$.
 Next if $C \in M$ is any club in $\kappa$, then by Lemma \ref{lem:normal}, ${A}_{\ast} \; {\subset}^{\ast} \; {f}^{-1}_{\ast}(C)$.
 It follows that ${f}_{\ast}''{A}_{\ast} \; {\subset}^{\ast} \; C$.
 Now since $\kappa$ is regular, $\otp({f}_{\ast}''{A}_{\ast}) = \kappa$.
 Let $g: \kappa \rightarrow {f}_{\ast}''{A}_{\ast}$ be the unique order isomorphism.
 Define $h: \kappa \rightarrow \kappa$ by $h(\alpha) = g(\alpha + 1)$, for each $\alpha \in \kappa$.
 We claim that $h$ bounds $M \cap {\kappa}^{\kappa}$.
 Indeed, let $f \in M \cap {\kappa}^{\kappa}$.
 Then ${C}_{f} = \{\alpha < \kappa: \alpha \ \text{is closed under} \ f\} \in M$ is a club in $\kappa$.
 Therefore there exists $\delta < \kappa$ such that $\left( {f}_{\ast}''{A}_{\ast} \right) \setminus \delta \subset {C}_{f}$.
 Consider any $\alpha < \kappa$ with $\alpha \geq \delta$.
 Then $h(\alpha) = g(\alpha + 1) \in {f}_{\ast}''{A}_{\ast}$, and since $g$ is order preserving, $\delta \leq \alpha < \alpha + 1 \leq g(\alpha + 1)$.
 Therefore $g(\alpha + 1) \in {C}_{f}$, and since $\alpha < g(\alpha + 1)$, $f(\alpha) < g(\alpha + 1) = h(\alpha)$.
 Thus we have shown that $\forall \alpha < \kappa\[\alpha \geq \delta \implies f(\alpha) < h(\alpha)\]$, as required.
 \end{proof}
\begin{proof}[Proof of Theorem \ref{thm:main2}]
Suppose for a contradiction that ${\b}_{\kappa} < {\s}_{\kappa}$.
Put $\lambda = {\b}_{\kappa}$.
By the results in \cite{sh541}, $\kappa < \lambda < {\s}_{\kappa}$.
Let $\{{f}_{\xi}: \xi < \lambda\} \subset {\kappa}^{\kappa}$ be an unbounded family of size $\lambda$.
Let $M \prec H(\theta)$ be such that $\lc M \rc = \lambda$ and $\lambda \cup \{{f}_{\xi}: \xi < \lambda\} \subset M$.
Applying Lemma \ref{lem:bounded}, we get that $\{{f}_{\xi}: \xi < \lambda\} \subset M \cap {\kappa}^{\kappa}$ is bounded, which is a contradiction.
\end{proof}
We remark that it is not hard to force the consistency of ${\s}_{\kappa} < {\b}_{\kappa}$ for most regular uncountable cardinals $\kappa$.
Indeed if $\omega < \kappa = {\kappa}^{< \kappa}$ and if $\lambda > {2}^{\kappa}$ is a regular cardinal, then we can do a $\left(\mathord{< \kappa}\right)$--support iteration $\langle {\P}_{\alpha}, {\mathring{\Q}}_{\alpha}: \alpha < {\lambda}^{+}\rangle$ such that if $\alpha < {\kappa}^{+}$, then ${\mathring{\Q}}_{\alpha}$ names the poset for adding a Cohen subset of $\kappa$, while if ${\kappa}^{+} \leq \alpha < {\lambda}^{+}$, then ${\mathring{\Q}}_{\alpha}$ names the poset for adding a dominating function from $\kappa$ to $\kappa$.
It is straightforward to check that in the resulting model, ${\s}_{\kappa} \leq {\kappa}^{+} < \lambda < {\b}_{\kappa}$.
\section{Some questions}\label{sec:q}
It is unknown to what extent the bound on ${\cov}^{\ast}({\ZZZ}_{0})$ given by Theorem \ref{thm:main1} can be improved.
\begin{Question} \label{q:1}
 Is ${\cov}^{\ast}({\ZZZ}_{0}) \leq \b$?
\end{Question}
This is essentially equivalent to asking whether there is a proper forcing that diagonalizes $\V \cap {\ZZZ}_{0}$ while preserving all unbounded families in $\V$.

The following is an outstanding open problem about cardinal invariants above the continuum.
\begin{Question} \label{q:2}
 Is it consistent to have a regular uncountable cardinal $\kappa$ for which ${\b}_{\kappa} < {\a}_{\kappa}$?
\end{Question}
\def\polhk#1{\setbox0=\hbox{#1}{\ooalign{\hidewidth
  \lower1.5ex\hbox{`}\hidewidth\crcr\unhbox0}}}
\providecommand{\bysame}{\leavevmode\hbox to3em{\hrulefill}\thinspace}
\providecommand{\MR}{\relax\ifhmode\unskip\space\fi MR }
% \MRhref is called by the amsart/book/proc definition of \MR.
\providecommand{\MRhref}[2]{%
  \href{http://www.ams.org/mathscinet-getitem?mr=#1}{#2}
}
\providecommand{\href}[2]{#2}

%\bibliographystyle{amsplain}
%\bibliography{Bibliography} 

\begin{thebibliography}{10}

\bibitem{barsmall}
T.~Bartoszy{\'n}ski, \emph{Invariants of measure and category}, Handbook of set
  theory. {V}ols. 1, 2, 3, Springer, Dordrecht, 2010, pp.~491--555.

\bibitem{blasssmall}
A.~Blass, \emph{Combinatorial cardinal characteristics of the continuum},
  Handbook of set theory. {V}ols. 1, 2, 3, Springer, Dordrecht, 2010,
  pp.~395--489.

\bibitem{BHZ}
A.~Blass, T.~Hyttinen, and Y.~Zhang, \emph{Mad families and their neighbours},
  (Preprint).

\bibitem{bs1}
J.~Brendle and S.~Shelah, \emph{Ultrafilters on {$\omega$}---their ideals and
  their cardinal characteristics}, Trans. Amer. Math. Soc. \textbf{351} (1999),
  no.~7, 2643--2674.

\bibitem{sh541}
J.~Cummings and S.~Shelah, \emph{Cardinal invariants above the continuum}, Ann.
  Pure Appl. Logic \textbf{75} (1995), no.~3, 251--268.

\bibitem{uk2k1}
M.~D{\v{z}}amonja and S.~Shelah, \emph{Universal graphs at the successor of a
  singular cardinal}, J. Symbolic Logic \textbf{68} (2003), no.~2, 366--388.

\bibitem{frem3}
D.~H. Fremlin, \emph{Measure theory. {V}ol. 5}, Torres Fremlin, Colchester,
  2015, Set-theoretic measure theory. {P}art {I}, {I}{I}.

\bibitem{uk2k2}
S.~Garti and S.~Shelah, \emph{Partition calculus and cardinal invariants}, J.
  Math. Soc. Japan \textbf{66} (2014), no.~2, 425--434.

\bibitem{michaelstarinv}
F.~Hern{\'a}ndez-Hern{\'a}ndez and M.~Hru{\v{s}}{\'a}k, \emph{Cardinal
  invariants of analytic {$P$}-ideals}, Canad. J. Math. \textbf{59} (2007),
  no.~3, 575--595.

\bibitem{kamo}
S.~Kamo, \emph{Splitting numbers on uncountable regular cardinals},
  (Preprint).

\bibitem{zapping}
C.~Laflamme, \emph{Zapping small filters}, Proc. Amer. Math. Soc. \textbf{114}
  (1992), no.~2, 535--544.

\bibitem{b<s}
S.~Shelah, \emph{On cardinal invariants of the continuum}, Axiomatic set theory
  ({B}oulder, {C}olo., 1983), Contemp. Math., vol.~31, Amer. Math. Soc.,
  Providence, RI, 1984, pp.~183--207.

\bibitem{soleckitodorcevic2}
S.~Solecki and S.~Todorcevic, \emph{Avoiding families and {T}ukey functions on
  the nowhere-dense ideal}, J. Inst. Math. Jussieu \textbf{10} (2011), no.~2,
  405--435.

\bibitem{suzukisplitting}
T.~Suzuki, \emph{About splitting numbers}, Proc. Japan Acad. Ser. A Math. Sci.
  \textbf{74} (1998), no.~2, 33--35.

\bibitem{analyticgaps}
S.~Todor{\v{c}}evi{\'c}, \emph{Analytic gaps}, Fund. Math. \textbf{150} (1996),
  no.~1, 55--66.

\bibitem{jindrasplitting}
J.~Zapletal, \emph{Splitting number at uncountable cardinals}, J. Symbolic
  Logic \textbf{62} (1997), no.~1, 35--42.

\end{thebibliography}
\end{document}